\documentclass[10pt]{article}
\usepackage{mathrsfs}
\usepackage{latexsym}
\usepackage{bbm}
\usepackage{amsthm}
\usepackage{amsmath}
\usepackage{amsfonts}
\usepackage{amssymb}
\usepackage{multirow}
\setcounter{page}{1}
\usepackage{latexsym}
\usepackage[dvips]{graphicx}
\usepackage{overpic}
\usepackage{graphicx}

\setlength{\topmargin}{0in} \setlength{\oddsidemargin}{0cm}
\setlength{\textheight}{22.38cm} \setlength{\textwidth}{16cm}
\newtheorem{theorem}{\bf Theorem}[section]
\newtheorem*{thA}{\bf Theorem A}{\vspace{2ex}}
\newtheorem{lemma}[theorem]{\bf Lemma}
\newtheorem{prop}[theorem]{\bf Proposition}

\newenvironment{definition}{{\bf Definition}}{\vspace{2ex}}
\newenvironment{notation}{{\bf Notation}}{\vspace{2ex}}
%\newenvironment{notation2}{\noindent{\bf Notation 2}}{\vspace{2ex}}
%\newenvironment{note}{\noindent{\bf Note}}{\vspace{2ex}}
%\newenvironment{note1}{\noindent{\bf Note 1}}{\vspace{2ex}}
%\newenvironment{note2}{\noindent{\bf Note 2}}{\vspace{2ex}}

%%%%%%%%%%%%%%%

\def \and {\, \mb
ox{\rm and}\, }

\def \supp {\,{\rm supp}\,}

\def \l {\left}
\def \r {\right}
\makeatletter
\def\R{\mathbb{R}^n}
\newcommand\diff{\,\mathrm{d}}

\newcommand{\Rmnum}[1]{\expandafter\@slowromancap\romannumeral #1@}
\newcommand{\norm}[1]{\Vert#1\Vert}
\newcommand{\abs}[1]{\left\vert#1\right\vert}

\makeatother
\begin{document}

\title {\bf The sharp $L^p$ decay of oscillatory integral operators with certain homogeneous polynomial phases in several variables}

\author{Shaozhen Xu\thanks{Corresponding author}\, \thanks{School of Mathematical Sciences,
        University of Chinese
        Academy of Sciences, Beijing 100049, P.R. China. E-mail address:
        {\it xushaozhen14b@mails.ucas.ac.cn}.}\,\,\,,\quad
       Dunyan Yan\thanks{School of Mathematical Sciences, University of
        Chinese Academy of Sciences, Beijing 100049, P. R. China. E-mail
        address: {\it ydunyan@ucas.ac.cn}.} }

\date{}
\maketitle{}

\begin{abstract}
 We obtain the $L^p$ decay of oscillatory integral operators $T_\lambda$ with certain homogeneous polynomial phase of degree $d$ in $(n+n)$-dimensions. In this paper we require that $d>2n$. If $d/(d-n)<p<d/n$, the decay is sharp and the decay rate is related to the Newton distance. In the case of $p=d/n$ or $d/(d-n)$, we also obtain the \textit{almost} sharp decay, here ``\textit{almost}" means the decay contains a $\log(\lambda)$ term. For otherwise, the $L^p$ decay of $T_\lambda$ is also obtained but not sharp. A counterexample also arises in this paper to show that $d/(d-n)\leq p\leq d/n$ is not necessary to guarantee the sharp decay.\\
\end{abstract}
\textbf{Keywords:} oscillatory integral operators,\quad sharp $L^p$ decay,\quad several variables,\quad Newton distance\\

\noindent\textbf{2010 Mathematics Subject Classification:} 42B20 47G10
\section{Introduction}
We consider the following oscillatory operator:
\begin{equation}\label{oio}
T_\lambda(f)(x)=\int_{\R} e^{i\lambda S(x,y)}\psi(x,y)f(y)\diff{y},\ n\geq 2
\end{equation}
where $x\in \R, \psi(x,y)$ is a smooth function supported in a compact neighborhood of the origin, $S(x,y)=\sum_{|\alpha|+|\beta|=d}a_{\alpha,\beta}x^\alpha y^\beta$ is a real-valued homogeneous polynomial in higher dimension with degree $d$, here $\alpha, \beta$ are multi-indices. Research on this operator centers on the decay of $L^p$ bound as the parameter $\lambda$ tends to infinity. In one dimensional case, Phong and Stein contributed a lot to this subject. In a series of their articles \cite{phong1992oscillatory}, \cite{phong1994models}, \cite{phong1997newton}, \cite{phong1998damped}, they developed the almost-orthogonality method to obtain the sharp $L^2$ decay of oscillatory integral operators with phase functions varying from homogeneous polynomials to real-valued analytic functions. They also clarified the relation between the decay rate and the Newton distance raised by Arnold and Varchenko in \cite{arnold1986singularites}. Later, the sharp $L^2$ estimate was extended to $C^{\infty}$ phases by Rychkov \cite{rychkov2001sharp} and Greenblatt \cite{greenblatt2005sharp}. When $S$ is smooth and $T_\lambda$ has two-sided Whitney fold, Greenleaf and Seeger obtained the endpoint estimates for the $L^p$ decay rate of $T_\lambda$ in \cite{greenleaf1999oscillatory}. Yang obtained the sharp endpoint estimate in \cite{yang2004sharp} with the assumption $a_{1,d-1}a_{d-1,1}\neq 0$, here $a_{\alpha,\beta}$ are coefficients of homogeneous polynomial phase function $S(x,y)$ in $\mathbb{R}\times \mathbb{R}$. Shi and Yan \cite{shi2016sharp} established the sharp endpoint $L^p$ decay for arbitrary homogeneous polynomial phase functions. Later, Xiao extended this result to arbitrary analytic phases as well as presented a very specific review for this subject in \cite{xiao2016endpoint}. Higher dimensional case even $L^2$ estimate has not been understood well. The one dimensional result of $L^2$ decay has been partially extended to (2+1)-dimensions by Tang \cite{tang2006decay}. The further remarkable work in higher dimension was obtained in \cite{greenleaf2007oscillatory}, the authors obtained the $L^2$ estimate for the oscillatory integral operators with homogeneous polynomials satisfying various genericity assumptions.

Inspired by the method used in \cite{yang2004sharp} and \cite{shi2016sharp}, we prove our main result by embeding $T_\lambda$ into a family of analytic operators and using complex interpolation. This method requires us to establish the $L^2-L^2$ decay estimate as well as $H^1-L^1$ boundedness of operators with different amplitude functions. Before we state our main theorem, some definitions should be illustrated.\\

\begin{definition}(\cite{greenleaf2007oscillatory})
If $S(x,y)\in C^{\omega}(\mathbb{R}^{n_X}\times \mathbb{R}^{n_Y})$ with Taylor series $\sum_{\alpha,\beta}a_{\alpha,\beta}x^{\alpha}y^{\beta}$ having no pure $x$- or $z$-term, we denote the \textit{reduced Newton polyhedron} by
\[\mathcal{N}_0(S)= \text{convex hull}\l(\bigcup_{a_{\alpha,\beta}\neq 0}(\alpha,\beta)+\mathbb{R}_{+}^{n_X+n_Y}\r).\]
Then the \textit{Newton polytope} of $S(x,y)$  (at $(0,0)$) is
\[\mathcal{N}(S):=\partial(\mathcal{N}_0(S)),\]
and the \textit{Newton distance} $\delta(S)$ of $S$ is then
\[\delta(S):=\inf\{\delta^{-1}>0:(\delta^{-1},\cdots,\delta^{-1})\in \mathcal{N}(S)\}.\]
\end{definition}
These definitions correspond to  the 1-dimension definitions in\cite{phong1997newton}. In our main theorem, the next definition is necessary.\\

\begin{definition}
Denote the Hilbert-Schmidt norm of a matrix $A=(a_{ij})$ by
\begin{align}
\norm{A}_{HS}&=(\mathrm{tr}(A\cdot A^T))^{1/2} \notag \\
&=(\sum_{i,j}\abs{a_{ij}}^2)^{1/2}.
\end{align}
\end{definition}

Denote by $S^d(\R\times\R)$ the space of homogeneous polynomials of degree $d$ on $\R\times \R$. In fact, for oscillatory operators with homogeneous polynomial phases, we are only interested in polynomial phase functions not containing pure $x-$ or $y-$terms since these leave the operator norm unchanged. Thus, we denote the space consisting of such polynomials by $\mathcal{O}^d(\R\times\R)$.

Now, we formulate our main result:
\begin{thA}\label{main-result}
Suppose $S(x,y)\in \mathcal{O}^d(\R\times\R)$ and $d>2n\geq 4$, if $\norm{S_{xy}^{''}}_{HS}^{1/(d-2)}$ is a norm of $\R\times \R$, then it follows
\begin{equation}
\norm{T_\lambda}_p\lesssim \begin{cases}
\lambda^{-\delta/2} & d/(d-n)<p<d/n,\\
\lambda^{-\delta/2}\l(\log(\lambda)\r)^\delta & p=d/n \  \text{or} \  p=d/(d-n),\\
\lambda^{-1/{p'}} & 1<p<d/(d-n),\\
\lambda^{-1/p} & d/n<p<\infty.
\end{cases}
\end{equation}
where $\delta$ is the Newton distance. If $d/(d-n)<p<d/n$, the decay is sharp. If $p=d/n$ or $d/(d-n)$, the decay is sharp except possibly for a $\log(\lambda)$ term. And $d/(d-n)\leq p\leq d/n$ is not necessary to guarantee the sharp decay.
\end{thA}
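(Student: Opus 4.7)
The plan is to embed $T_\lambda$ into an analytic family $\{T_{\lambda,z}\}$ whose amplitude carries a damping factor built from $\norm{S_{xy}''}_{HS}$, and then apply Stein's complex interpolation between two endpoint estimates on the boundary lines of an appropriate vertical strip, following the scheme of Yang \cite{yang2004sharp} and Shi--Yan \cite{shi2016sharp} in the one-variable setting. The two endpoints are (i) an $L^2$--$L^2$ decay estimate and (ii) an $H^1$--$L^1$ boundedness statement, and the interpolation parameter dictated by the Newton distance $\delta$ produces the principal rate $\lambda^{-\delta/2}$.

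For (i) I would apply a $TT^*$ argument, reducing matters to an oscillatory integral in the $y$-variable with phase $S(x,y)-S(x,y')$. The hypothesis that $\norm{S_{xy}''}_{HS}^{1/(d-2)}$ is a norm on $\R\times \R$, combined with homogeneity of degree $d-2$, forces the quantitative lower bound $\norm{S_{xy}''(x,y)}_{HS}\gtrsim (\abs{x}+\abs{y})^{d-2}$. A dyadic decomposition in $\abs{x}+\abs{y}$ followed by stationary phase or non-stationary-phase integration by parts on each shell then delivers the required $L^2$ decay with the damped amplitude. For (ii) I would again decompose dyadically and replace $H^1$ functions by atoms; on each shell the factor $\norm{S_{xy}''}_{HS}^{z}$ absorbs the loss produced by the atomic cancellation, and summation over shells is governed by the Newton polytope of $S$, yielding an exponent controlled by $\delta$. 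Stein interpolation between (i) and (ii), with the usual endpoint loss of a logarithmic factor on the two boundary exponents $p=d/n$ and $p=d/(d-n)$, produces both the sharp interior rate and the almost-sharp endpoint rates claimed.

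The two outer regimes are handled by separate interpolation. For $1<p<d/(d-n)$ I would interpolate the endpoint bound at $p=d/(d-n)$ against the trivial $L^1\to L^1$ bound obtained by first integrating in $x$ with one integration by parts, giving $\lambda^{-1/p'}$ after duality. The symmetric argument gives $\lambda^{-1/p}$ for $d/n<p<\infty$. Sharpness of the principal rate is verified by a Knapp-type test function adapted to a level set of $S$ of thickness comparable to $\lambda^{-1}$ and tuned to the edge of the Newton polytope realizing the distance $\delta$; the same construction, applied outside the interval $[d/(d-n),d/n]$ for a carefully chosen $S$, produces the counterexample showing that the stated range of $p$ is not necessary for sharpness. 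The main obstacle is step (ii): the $H^1$--$L^1$ estimate in $n+n$ variables with a Newton-distance-calibrated damping factor, because the level sets of $S$ now form an $(n-1)$-parameter family of hypersurfaces and the atomic cancellation must be exploited uniformly across them while simultaneously tracking the dependence of the implicit constant on $\Im z$ so that Stein's interpolation actually applies.
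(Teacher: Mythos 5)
Your high-level architecture is exactly the paper's: damp $T_\lambda$ with $\norm{S_{xy}''}_{HS}^z$, prove an $L^2\to L^2$ decay estimate for $\Re z$ large (the paper's threshold is $\sigma_2=(d-2n)/(2(d-2))$) and an $H^1\to L^1$ bound at $\Re z=\sigma_1=-n/(d-2)$, and run Stein's interpolation; the numerology $\theta=-\sigma_1/(\sigma_0-\sigma_1)$ reproduces $\lambda^{-\delta/2}$ on the interior band and the $\log$-loss exactly at $p=d/(d-n)$, $d/n$. Your $L^2$ step is also essentially the paper's, modulo packaging (the paper uses a conic partition to isolate a single nonvanishing entry $S_{x_1y_1}''$ and then invokes the one-dimensional Phong--Stein operator estimate on each dyadic shell, rather than a generic $TT^*$; these are equivalent in spirit). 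The counterexample is likewise the right idea; the paper takes a variable-separable $S$ like $\tfrac15(x_1^5y_1+x_1y_1^5+x_2^5y_2+x_2y_2^5)$ and iterates the one-dimensional sharp result.

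Where the proposal has a genuine gap is step (ii), and you correctly sense this is the crux, but the mechanism you describe would not deliver the estimate. Saying that the damping ``absorbs the loss produced by the atomic cancellation'' and that ``summation over shells is governed by the Newton polytope'' does not engage with the actual obstruction, which is the oscillation $e^{iP(x,y)}$ itself: the phase is an arbitrary polynomial with $c_{0,\beta}=0$, not just $S$, and its $y$-degree must be brought down. The paper's mechanism is Pan's induction on the $y$-degree $l$ of $P$. At the base $l=0$ one must first establish that the \emph{undamped-of-oscillation} operator $T_0f(x)=\int K(x,y)f(y)\,dy$ with $K=\norm{S_{xy}''}_{HS}^{\sigma_1+it}\psi$ satisfies $L^p$ ($1<p<\infty$), weak $(1,1)$, and $H^1\to L^1$ bounds, using that the norm hypothesis forces $|K(x,y)|\lesssim(|x|^n+|y|^n)^{-1}$ and a H\"ormander condition $\sup_{y\in Q}\int_{(Q^*)^c}|K(x,y)-K(x,x_Q)|\,dx\lesssim 1$, split according to whether $|x_Q|\lessgtr 2d_Q$. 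In the inductive step one peels off the top-degree-in-$y$ block $\sum_{|\alpha|\ge1,\,|\beta|=l}c_{\alpha,\beta}x^\alpha y^\beta$, chooses a scale $r$ via $r^{-1}=\max_{|\alpha|\ge1,\,|\beta|=l}|d_Q|^{l/|\alpha|}|c_{\alpha,\beta}|^{1/|\alpha|}$, and on the far region $|x-x_Q|\ge\max\{Md_Q,r\}$ controls the $TT^*$ kernels $L_j$ of the dyadically localized pieces by van der Corput in $v$ followed by the Ricci--Stein sublevel-set estimate for polynomials, giving $\norm{L_j}_2\lesssim 2^{-j\theta}$ by Schur; near the origin one uses the $L^2$ bound of $T_0$ or a first-order Taylor expansion of $e^{iP}-e^{iQ}$. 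None of this is hinted at in your sketch, and ``the level sets of $S$ form an $(n-1)$-parameter family of hypersurfaces'' is not the relevant structure --- $S$ plays no role in the phase of $T^P$ at all, only in the kernel $K$. You also need the constant to be independent of the coefficients of $P$ (uniformly over the induction), which is precisely what the Ricci--Stein lemma supplies and what a shell-by-shell absorption argument would not obviously give. Finally, your handling of the outer ranges is slightly off: rather than interpolating the lossy endpoint $p=d/(d-n)$ against a trivial $L^1$ bound (which would drag the $\log$ into the open range), one should interpolate the analytic family directly using the lossless $L^2\to L^2$ estimate at $\sigma_0>\sigma_2$, which yields $\lambda^{-1/p'}$ for $1<p<d/(d-n)$ cleanly, with the symmetric argument by duality for $p>d/n$.
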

To clarify the relation between \textit{Newton distance} and the $L^p$ decay rate, a proposition in \cite{greenleaf2007oscillatory} should be mentioned.
\begin{prop}\label{prop1.1}
If $S(x,y)\in \mathcal{O}^d(\R\times\R)$ satisfies the rank one condition
\[{\rm rank}(S_{xy}^{''})\geq 1,\ \text{for all}\  (x,y)\neq (0,0),\]
then $\delta(S)=2n/d$.
\end{prop}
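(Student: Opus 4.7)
The plan is to verify that the diagonal point $p := (d/(2n), \ldots, d/(2n))$ lies in $\mathcal{N}_0(S)$. Since $S$ is homogeneous of degree $d$, every exponent $(\alpha, \beta)$ with $a_{\alpha, \beta} \neq 0$ lies on the hyperplane $\pi := \{\sum_i x_i + \sum_j y_j = d\}$, which is therefore a supporting hyperplane of $\mathcal{N}_0(S) \subset \{\sum_i x_i + \sum_j y_j \geq d\}$. The diagonal meets $\pi$ exactly at $p$, so once $p \in \mathcal{N}_0(S)$ is established, $p$ automatically lies on $\partial \mathcal{N}_0(S) = \mathcal{N}(S)$; moreover $(t, \ldots, t)$ is interior to $\mathcal{N}_0(S)$ for $t > d/(2n)$ and outside it for $t < d/(2n)$, so $p$ is the unique diagonal point of $\mathcal{N}(S)$, yielding $\delta(S) = 2n/d$.

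Suppose for contradiction that $p \notin \mathcal{N}_0(S)$. Writing $\mathcal{N}_0(S) = C + \mathbb{R}_+^{2n}$ with $C$ the convex hull of the exponent vectors, the separating hyperplane theorem furnishes $(u, v) \in \mathbb{R}_+^{2n}$ (the separating normal must lie in the dual cone of $\mathbb{R}_+^{2n}$), normalized so that $\sum_i u_i + \sum_j v_j = 2n$, with $u \cdot \alpha + v \cdot \beta > d$ for every monomial of $S$. The uniform choice $(1, \ldots, 1)$ produces only $|\alpha| + |\beta| = d$, not strict inequality, so $(u, v) \neq (1, \ldots, 1)$ and has some coordinate strictly less than $1$. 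Along the ray $\lambda(t) := (1 - t)(1, \ldots, 1) + t(u, v)$, the sum is preserved at $2n$, and $\lambda(t) \cdot (\alpha, \beta) = d + t(u \cdot \alpha + v \cdot \beta - d) > d$ for all $t > 0$. Taking $t^{*} > 1$ to be the largest $t$ with $\lambda(t) \in \mathbb{R}_+^{2n}$ and replacing $(u, v)$ by $\lambda(t^{*})$, I may additionally assume that at least one coordinate of $(u, v)$ equals $0$ while retaining the strict separation.

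Now probe along the quasi-homogeneous curve $\gamma(t) := (t^{u_1}, \ldots, t^{u_n}, t^{v_1}, \ldots, t^{v_n})$ for $t \in (0, 1]$. Because some exponent is $0$, the corresponding coordinates stay equal to $1$, so $\gamma(t) \to \gamma^{*} \neq 0$ as $t \to 0^{+}$. For each pair $(i, j)$,
\[
(S''_{xy})_{ij}(\gamma(t)) = \sum_{\substack{\alpha_i, \beta_j \geq 1 \\ a_{\alpha, \beta} \neq 0}} a_{\alpha, \beta}\, \alpha_i \beta_j\, t^{u \cdot \alpha + v \cdot \beta - u_i - v_j},
\]
and every exponent strictly exceeds $d - u_i - v_j \geq d - 2n > 0$, using the strict separation together with the bound $u_i + v_j \leq \sum_i u_i + \sum_j v_j = 2n$ and the standing hypothesis $d > 2n$. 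Consequently each entry $(S''_{xy})_{ij}(\gamma(t)) \to 0$, and by continuity $S''_{xy}(\gamma^{*}) = 0$ at the nonzero point $\gamma^{*}$, contradicting the rank-one hypothesis. The main subtlety is the line-extension step that drags the separating direction onto $\partial \mathbb{R}_+^{2n}$: absent this reduction one has $\gamma(t) \to 0$, and one would be forced to compare the homogeneity lower bound $\|S''_{xy}(z)\|_{HS} \gtrsim |z|^{d-2}$ against the upper estimate above, an inequality that does not directly force a contradiction.
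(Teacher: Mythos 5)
The paper does not prove this proposition: it is quoted from Greenleaf--Pramanik--Tang \cite{greenleaf2007oscillatory}, so there is no in-paper argument to compare against. Your proof supplies one, and it is essentially correct. The plan --- reduce to showing the diagonal point $p=(d/(2n),\ldots,d/(2n))$ lies in $\mathcal{N}_0(S)$; from a putative separating hyperplane extract a weight $(u,v)\in\mathbb{R}_+^{2n}$, normalised to $\sum u_i+\sum v_j=2n$, with $u\cdot\alpha+v\cdot\beta>d$ on every monomial of $S$; slide that weight along the line through $(1,\ldots,1)$ until it hits $\partial\mathbb{R}_+^{2n}$; then probe along the quasi-homogeneous curve $\gamma(t)$, whose limit $\gamma^*$ is nonzero precisely because a weight vanishes, to force $S''_{xy}(\gamma^*)=0$ --- is sound, and your closing remark correctly diagnoses why the push to $\partial\mathbb{R}_+^{2n}$ cannot be skipped.

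Two small points. First, the proposition as transcribed here drops the hypothesis $d>2n$, which is a standing assumption of Theorem~A and which your argument genuinely uses at the step $d-u_i-v_j\geq d-2n>0$; it is not removable, since $S(x,y)=x_1y_1$ on $\mathbb{R}^2\times\mathbb{R}^2$ satisfies the rank-one condition with $d=2<2n=4$, while the diagonal meets $\mathcal{N}(S)$ at $(1,1,1,1)$, giving $\delta(S)=1\neq 2n/d$. Second, the claim $t^*>1$ should read $t^*\geq 1$: if the initial separating normal already has a zero coordinate, the ray exits $\mathbb{R}_+^{2n}$ exactly at $t^*=1$. This is harmless, since only $t^*>0$ is needed to preserve the strict inequality $\lambda(t^*)\cdot(\alpha,\beta)>d$.
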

Obviously $S(x,y)$ satisfies the \textit{rank one} condition because of the assumptions in Theorem \ref{main-result}, thus the \textit{Newton distance} in Theorem A is actually $2n/d$.\\

The main tool in our proof is the interpolation of analytic families of operators which was due to Stein \cite{stein1959extension}. Here, the analytic families of operators are
\begin{equation}\label{damped-oio}
T_\lambda^z(f)(x)=\int_{\R} e^{i\lambda S(x,y)}\norm{S_{xy}^{''}}_{HS}^z\psi(x,y)f(y)\diff{y}, \ z=\sigma+it \in\mathbb{C}.
\end{equation}
Especially, $T_\lambda^0=T_\lambda$. Theorem A naturally follows from the interpolation between $L^2-L^2$ decay of $T_\lambda^z$ and the $H^1-L^1$ mapping property of $T_\lambda^z$ as well as dual arguments.

The layout of the paper is as follows. In the next section, we give the $L^2-L^2$ decay estimate of $T_\lambda^z$. Section 3 is devoted to prove the $H^1-L^1$ boundedness of $T_\lambda^z$. In the last section, the optimality of decay will be obtained and we will also give an example to demonstrate our main theorem and a counterexample to show that $d/(d-n)\leq p\leq d/n$ is not necessary to guarantee the sharp decay.\\
\begin{notation}
In this paper, the notation $C$ denotes the positive constant used in the usual way and it may vary according to different conditions.
\end{notation}

\section{$L^2$ decay of the damped oscillatory integral operators}

In this part, the desired result follows:
\begin{theorem}\label{L2-L2}
Set $\sigma_1=-n/(d-2), \sigma_2=(d-2n)/(2(d-2))$ and consider the operators defined in \eqref{damped-oio}, if the Hessian of its phase function satisfies the rank one condition, the next estimates hold
\begin{equation}\label{L2-L2-eq}
\norm{T_\lambda^{z}}_{2}\lesssim \begin{cases}
C_z\abs{\lambda}^{-1/2}, & \sigma>\sigma_2;\\
C_z\abs{\lambda}^{-1/2}\log(\lambda), & \sigma=\sigma_2;\\
C_z\abs{\lambda}^{-[(d-2)\sigma+n]/d}, & \sigma_1<\sigma<\sigma_2.
\end{cases}
\end{equation}
\end{theorem}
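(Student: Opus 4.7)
The plan is to run a Phong--Stein dyadic argument: decompose the support of $\psi$ according to the distance from the origin, rescale each dyadic piece to the unit scale via the homogeneity of $S$, estimate the resulting base operator by interpolating a trivial bound with an oscillatory $|\mu|^{-1/2}$ bound, and sum the dyadic contributions. Concretely, let $\{\phi_k\}_{k\geq 0}$ be a smooth partition of unity on $\supp\psi$ with $\phi_k$ supported in $\{|(x,y)|\sim 2^{-k}\}$, and let $T_{\lambda,k}^{z}$ denote the piece obtained by inserting $\phi_k$ into the amplitude. Using $S(2^{-k}u,2^{-k}v)=2^{-kd}S(u,v)$ together with $\norm{S_{xy}^{''}(2^{-k}u,2^{-k}v)}_{HS}=2^{-k(d-2)}\norm{S_{xy}^{''}(u,v)}_{HS}$, the change of variables $(x,y)=2^{-k}(u,v)$ produces the scaling identity
\begin{equation}
\norm{T_{\lambda,k}^{z}}_{2}=2^{-k[n+(d-2)\sigma]}\,\norm{\tilde{T}^{z}_{\lambda 2^{-kd}}}_{2},
\end{equation}
where $\tilde{T}^{z}_{\mu}$ is a base operator whose amplitude, depending mildly on $k$ but uniformly $C^{\infty}$, is supported in the unit annulus $|(u,v)|\sim 1$.

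On the unit scale the amplitude $\norm{S_{xy}^{''}}_{HS}^{z}\tilde{\psi}$ is uniformly bounded by a constant $C_z$ growing polynomially in $\abs{\Im z}$, which gives the trivial bound $\norm{\tilde{T}^{z}_{\mu}}_{2}\leq C_z$ by Cauchy--Schwarz. For the oscillatory bound, the hypothesis that $\norm{S_{xy}^{''}}_{HS}^{1/(d-2)}$ is a norm combined with homogeneity forces $\norm{S_{xy}^{''}(u,v)}_{HS}\geq c>0$ on the unit annulus; after a finite partition of unity and a rotation of coordinates one may assume on each piece that a specific mixed second partial $\partial_{u_i}\partial_{v_j}S$ is bounded away from zero. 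A $TT^{*}$ argument with integration by parts in $v_j$ (equivalently, a one-dimensional Hörmander oscillatory-operator estimate after freezing the remaining variables) then yields $\norm{\tilde{T}^{z}_{\mu}}_{2}\leq C_z\,\abs{\mu}^{-1/2}$, so in total
\begin{equation}
\norm{\tilde{T}^{z}_{\mu}}_{2}\leq C_z\,\min\bigl(1,\abs{\mu}^{-1/2}\bigr).
\end{equation}

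Combining the scaling identity with the triangle inequality,
\begin{equation}
\norm{T_\lambda^{z}}_{2}\leq C_z\sum_{k\geq 0}2^{-k[n+(d-2)\sigma]}\,\min\bigl(1,\abs{\lambda}^{-1/2}\,2^{kd/2}\bigr),
\end{equation}
and splitting this sum at the critical scale $k_0\sim d^{-1}\log_2|\lambda|$ finishes the argument: the tail $k>k_0$ uses the trivial base bound and is a geometric series in $2^{-k[n+(d-2)\sigma]}$, convergent for $\sigma>\sigma_1$, while the head $k\leq k_0$ uses the $\abs{\mu}^{-1/2}$ bound and reduces to a geometric series in $2^{k[d/2-n-(d-2)\sigma]}$ whose behavior is convergent, logarithmic, or dominated by its last term according as $\sigma>\sigma_2$, $\sigma=\sigma_2$, or $\sigma<\sigma_2$. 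Assembling the two contributions produces exactly the three cases of \eqref{L2-L2-eq}. The principal difficulty lies in the base estimate: the rank-one hypothesis is strictly weaker than Hörmander's non-degeneracy assumption, so the $\abs{\mu}^{-1/2}$ decay cannot be invoked from a standard multidimensional theorem and must be produced by a one-dimensional reduction; moreover the damping factor $\norm{S_{xy}^{''}}_{HS}^{z}$, whose logarithmic derivatives contribute $\abs{\Im z}$ factors at each integration by parts, must be handled carefully to preserve the polynomial growth of $C_z$ needed for the Stein complex interpolation announced after Theorem A.
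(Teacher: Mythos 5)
Your proposal is correct and follows essentially the same route as the paper: dyadic decomposition near the origin, a conic/compactness localization so that the rank-one (or norm) hypothesis forces some mixed second partial $\partial_{x_i}\partial_{y_j}S$ to be bounded below, a one-dimensional Phong--Stein/H\"ormander reduction with the remaining variables frozen, and the exact same geometric sum $\sum_k 2^{-k[n+(d-2)\sigma]}\min(1,|\lambda|^{-1/2}2^{kd/2})$ split at $2^k\sim|\lambda|^{1/d}$. The only cosmetic difference is that you rescale each dyadic shell to the unit annulus and read the two bounds off a fixed ``base operator'' $\tilde T^z_\mu$, whereas the paper keeps each piece at scale $2^{-k}$ and substitutes $|S''_{x_1y_1}|\approx 2^{-(d-2)k}$ directly into the one-dimensional lemma; the resulting per-shell estimates are algebraically identical.
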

\noindent Our proof roughly follows the pattern appeared in \cite{tang2006decay} and \cite{greenleaf2007oscillatory} in which the authors offered a nice viewpoint of higher dimensional oscillatory integral operators. They combined the dyadic decomposition of the entire space and the local H\"{o}rmander lemma \cite{hormander1973oscillatory} on the dyadic shell to give the next lemma.
\begin{lemma}[\cite{greenleaf2007oscillatory}]
For a homogeneous phase function $S(x,y)$ of degree $d$ with $S_{xy}^{''}$ satisfying the rank one condition
\[{\rm rank}(S_{xy}^{''})\geq 1,\ \text{for all}\  (x,y)\neq (0,0)\]
on $\mathbb{R}^{n_X}\times \mathbb{R}^{n_Y} (n_X\geq n_Y \geq 2)$, there hold
\begin{equation}
\norm{T_\lambda}_2 \leq
\begin{cases}
C\lambda^{-(n_X+n_Y)/(2d)} & \text{if}\  d>n_{X}+n_{Y},\\
C\lambda^{-1/2}\log{\lambda} & \text{if}\  d=n_{X}+n_{Y},\\
C\lambda^{-1/2} & \text{if} \  2\leq d<n_{X}+n_{Y}.
\end{cases}
\end{equation}
\end{lemma}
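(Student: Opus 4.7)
The plan is to use the Phong--Stein dyadic-decomposition-plus-rescaling strategy in a form adapted to the joint radial homogeneity of the phase. First I would pick a smooth partition of unity $\psi=\sum_{j\ge 0}\psi_j$ on the support of $\psi$, with $\psi_j$ supported on the shell $\{(x,y):|(x,y)|\sim 2^{-j}\}$ and derivative bounds $|\partial^\gamma\psi_j|\lesssim 2^{j|\gamma|}$, splitting $T_\lambda=\sum_j T_j$ accordingly. On the $j$-th shell I would perform the isotropic rescaling $(x,y)=2^{-j}(u,v)$, which pushes the support onto a fixed unit annulus in $(u,v)$. Since $S$ is homogeneous of degree $d$, the phase becomes $\lambda\,2^{-jd}\,S(u,v)$, and a direct computation of Jacobians and $L^2$ dilation factors yields
\[\|T_j\|_{L^2\to L^2}=2^{-j(n_X+n_Y)/2}\,\|\widetilde T_j\|_{L^2\to L^2},\]
where $\widetilde T_j$ is an oscillatory integral operator with effective parameter $\Lambda_j:=\lambda 2^{-jd}$ carrying a uniformly smooth, compactly supported normalized amplitude.

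Next I would invoke the local H\"ormander-type $L^2$ estimate for oscillatory operators whose mixed Hessian has rank at least one uniformly on a compact set. The rank one hypothesis on $S_{xy}''$ at all nonzero $(x,y)$ is dilation invariant, so the mixed Hessian of the rescaled phase enjoys a uniform lower bound on its largest singular value on the compact unit annulus by a straightforward compactness argument. This delivers
\[\|\widetilde T_j\|_{L^2\to L^2}\lesssim \min\bigl(1,\Lambda_j^{-1/2}\bigr),\]
with implicit constant independent of $j$, the trivial bound $1$ handling the non-oscillatory regime $\Lambda_j\le 1$.

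Finally I would sum over $j$. Let $j_0$ be defined by $\Lambda_{j_0}\sim 1$, i.e.\ $2^{j_0}\sim \lambda^{1/d}$. The oscillatory contribution $j\le j_0$ collapses to
\[\lambda^{-1/2}\sum_{j\le j_0}2^{j(d-n_X-n_Y)/2},\]
which, depending on the sign of $d-(n_X+n_Y)$, is dominated by its last term, balances logarithmically in $j_0$, or is dominated by its first term, producing $\lambda^{-(n_X+n_Y)/(2d)}$, $\lambda^{-1/2}\log\lambda$, and $\lambda^{-1/2}$ in the three stated regimes. The residual non-oscillatory sum $j>j_0$ is bounded by $2^{-j_0(n_X+n_Y)/2}=\lambda^{-(n_X+n_Y)/(2d)}$, which is absorbed by each of the above. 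The main obstacle is establishing the uniformity in $j$ of the local H\"ormander estimate; once the rank-one bound is shown to hold uniformly on the unit annulus --- via compactness of the annulus and the closed condition $(u,v)\neq(0,0)$ --- the remainder of the proof reduces to inspecting this one geometric sum.
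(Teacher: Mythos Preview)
Your proposal is correct and follows essentially the same approach that the paper attributes to Greenleaf--Pramanik--Tang: a dyadic decomposition of the support into shells $|(x,y)|\sim 2^{-j}$, rescaling to a unit annulus, the local H\"ormander estimate under the rank-one hypothesis on each shell, and summation of the resulting geometric series split at $2^{j_0}\sim\lambda^{1/d}$. The paper does not reprove this lemma but merely cites it and sketches exactly this method, before adapting the same scheme (with the Phong--Stein one-dimensional oscillatory estimate in place of H\"ormander's lemma) to prove its own damped version, Theorem~\ref{L2-L2}.
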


\noindent The difference in our proof is that we combine the dyadic decomposition and the local oscillatory estimate (Lemma 1.1 in \cite{phong1994models}). Now we turn to our proof of Theorem \ref{L2-L2}.
\begin{proof}
Since the support of $\psi(x,y)$ is compact, we my assume that $\supp(\psi)$ is contained in  $\{(x,y):\abs{(x,y)}\leq 1\}$. Considering the compactness of the sphere $\abs{(x,y)}=1$, we can make a partition of unity over the unit sphere, and then extend it to a partition of unity on $\mathbb{R}^{2n}\setminus \{0\}$, homogeneous of degree 0. Thus, to conclude the result of \eqref{L2-L2-eq}, it suffices to show that for each point on the unit sphere of $\R\times \R$, an operator supported in one of its (small enough) convex conic neighborhood has the desired decay rate. Decompose the unit ball by dyadic partition of unity $\{a_k\}$, $\sum_{k=0}^{\infty}a_k(x,y)\equiv 1$, and
\[\supp(a_k)\subseteq \{2^{-k-1}< \abs{(x,y)}\leq 2^{-k+1}\}.\]
Set $\psi_{k}=\psi a_k$ and $T_{\lambda,k}^{z}f(x)=\int_{\R} e^{i\lambda S(x,y)}\psi_k(x,y)f(y)\diff{y}$. Since the Hessian $S_{xy}^{''}$ satisfies the rank one condition, then for each $(x_0,y_0)\in \mathbb{S}^{2n-1}$, there exists at least a pair of indices $(i_0,j_0)$ such that $S_{x_{i_0}y_{j_0}}^{''}(x_0,y_0)\neq 0$. Set $C_0=\max\{|S_{x_{i}y_{j}}^{''}(x_0,y_0)|:1\leq i,j\leq n\}$, thus $C_0>0$ for each $(x_0,y_0)\in \mathbb{S}^{2n-1}$. Without confusion, we may assume $|S_{x_{1}y_{1}}^{''}(x_0,y_0)|=C_0$ and there must exist a sufficiently small neighborhood $\mathcal{U}$ of $(x_0,y_0)$ on the unit sphere such that
\begin{equation*}
C_0/2<|S_{x_{1}y_{1}}^{''}(x,y)|<2C_0,~  |S_{x_{i}y_{j}}^{''}(x,y)|<2C_0,~\forall (i,j)\neq (1,1), ~ \forall (x,y)\in \mathcal{U}.
\end{equation*}
Denote the conic convex hull of origin and $\mathcal{U}$ by $\mathcal{U}_c$. A finite number of such $\mathcal{U}_c$ cover the unit ball. Thus $\psi(x,y)$ can be assumed to be supported in $\mathcal{U}_c$. Obviously, on the support of $\psi_k$, we have $|S_{x_{1}y_{1}}^{''}(x,y)|\approx 2^{-(d-2)k}C_0$.

Writing $x=(x_1,x'),~ y=(y_1,y'),~ \phi_k^z(x,y)=\norm{S_{xy}^{''}}_{HS}^z\psi_k(x,y)$, it follows
\begin{align*}
T_{\lambda,k}^z(f)(x)&=\int_{\R} e^{i\lambda S(x,y)}\norm{S_{xy}^{''}}_{HS}^z\psi_k(x,y)f(y)\diff{y}\\
&=\int_{\mathbb{R}^{n-1}}\int_{\mathbb{R}}e^{i\lambda S(x_1,x',y_1,y')}\phi_k(x_1,x',y_1,y')f(y_1,y')\diff{y_1}\diff{y'}
\end{align*}
Set $S_{x',y'}(x_1,y_1)=S(x_1,x',y_1,y'), ~\phi_{k,x',y'}^z(x_1,y_1)=\phi_k^z(x_1,x',y_1,y')$ as well as $f_{y'}(y_1)=f(y_1,y')$, then
\begin{align*}
T_{\lambda,k}^z(f)(x_1,x')&~=\int_{\mathbb{R}^{n-1}}\int_{\mathbb{R}}e^{i\lambda S_{x',y'}(x_1,y_1)}\phi_{k,x',y'}^z(x_1,y_1)f_{y'}(y_1)\diff{y_1}\diff{y'}\\
&:=\int_{\mathbb{R}^{n-1}}\tilde{T}_{\lambda,k,x',y'}^zf_{y_1}(x_1)\diff{y'},
\end{align*}
where $\tilde{T}_{\lambda,k,x',y'}^zf_{y_1}(x_1)$ are the one dimensional oscillatory integral operators investigated in \cite{phong1994models}. Repeating the proof of Lemma 1.1 in \cite{phong1994models} and provided that $S_{x',y'}(x_1,y_1)$ is uniformly polynomial-like in $y_1$, we obtain
\begin{equation*}
\norm{\tilde{T}_{\lambda,k,x',y'}^zf_{y_1}(x_1)}_{L^2(\mathbb{R})}\lesssim \abs{z(z-1)}2^{-(d-2)k\sigma}\abs{\lambda 2^{-(d-2)k}}^{-1/2}\norm{f(\cdot,y')}_{L^2(\mathbb{R})}.
\end{equation*}
Combing this with the size of the support in $x'$ yields
\begin{align}\label{os-es}
\norm{T_{\lambda,k}^z}_2 &\lesssim \abs{z(z-1)}2^{-(d-2)k\sigma}\abs{\lambda 2^{-(d-2)k}}^{-1/2}2^{-(2n-2)k/2} \notag\\
& =C_z2^{-(d-2)k\sigma}2^{(d-2n)k/2}\abs{\lambda}^{-1/2}
\end{align}
where $C_z=\abs{z(z-1)}$.

On the other hand, from the size estimate, it is easy to verify
\begin{equation}\label{siz-es}
\norm{T_{\lambda,k}^z}_2\lesssim 2^{-(d-2)k\sigma}2^{-nk}.
\end{equation}
The estimates in \eqref{os-es} and \eqref{siz-es} are comparable if and only if
\begin{equation*}
2^{-(d-2)k\sigma}2^{k(d-2n)/2}\abs{\lambda}^{-1/2}\sim 2^{-(d-2)k\sigma}2^{-nk}, ~\text{or}~ 2^k\sim \abs{\lambda}^{1/d}
\end{equation*}
Thus
\begin{align}\label{2-sum}
\norm{T_{\lambda}^z}_2&\lesssim C_z\sum_{k=0}^{+\infty}\min\{2^{-(d-2)k\sigma}2^{k(d-2n)/2}\abs{\lambda}^{-1/2},2^{-(d-2)k\sigma}2^{-nk}\}\notag\\
&=C_z\left[\sum_{k=0}^{\frac{1}{d}\log_2|\lambda|}2^{-(d-2)k\sigma}2^{(d-2n)k/2}\abs{\lambda}^{-1/2}+
\sum_{k=\frac{1}{d}\log_2|\lambda|}^{+\infty}2^{-(d-2)k\sigma}2^{-nk}\right]\notag\\
&=C_z\left[\sum_{k=0}^{\frac{1}{d}\log_2|\lambda|}2^{{((d-2n)-2(d-2)\sigma)}k/2}\abs{\lambda}^{-1/2}+
\sum_{k=\frac{1}{d}\log_2|\lambda|}^{+\infty}2^{-k((d-2)\sigma+n)}\right].
\end{align}
If $\sigma>\sigma_2$, then $(d-2n)-2(d-2)\sigma<0$, the first sum in \eqref{2-sum} is therefore less than $C_z\abs{\lambda}^{-1/2}$, the second one is less than $C_z\abs{\lambda}^{-1/2}$.\\
If $\sigma=\sigma_2$, then $(d-2n)-2(d-2)\sigma=0$, the first term is less than $C_z|\lambda|^{-1/2}\log_2\abs{\lambda}$, the second term is less than $C_z|\lambda|^{-1/2}$.\\
If $\sigma_1<\sigma<\sigma_2$, then $(d-2n)-2(d-2)\sigma>0$ and $(d-2)\sigma+n>0$, the first sum is less than $C_z\abs{\lambda}^{-((d-2)\sigma+n)/d}$ and so is the second sum.\\
Summing up the three cases above, we complete the proof of Theorem \ref{L2-L2}.
\end{proof}

\section{$H^1-L^1$ mapping property of the damped oscillatory integral operators}
By using the result in \cite{ricci1987harmonic}, Pan \cite{pan1991hardy} establish the $H_{E}^1-L^1$ boundedness for oscillatory singular integral operators, where $H_{E}^1$ is a modified Hardy space. Later, Yang \cite{yang2004sharp} and Shi \cite{shi2016sharp} developed the method of Pan to get their corresponding $H^1-L^1$ and $H_{E}^1-L^1$ boundedness results for the oscillatory operators with homogeneous polynomial phase function. In fact, based on these works, the next result can be obtained.
\begin{theorem}\label{H1-L1}
Define an operator
\begin{equation*}
T^Pf(x)=\int_{\R} e^{iP(x,y)}\norm{S_{xy}^{''}}_{HS}^{\sigma_1+it}\psi(x,y)f(y)\diff{y}
\end{equation*}
 where $P(x,y)=\sum_{\alpha,\beta}c_{\alpha,\beta}x^{\alpha}y^{\beta}$ is a higher dimensional polynomial with $c_{0,\beta}=0$ for any $\beta$.
 %the other notations are same as the preceding definitions.
If $\norm{S_{xy}^{''}}_{HS}^{1/(d-2)}$ is a norm of $\R\times \R$, then $T^P$ maps $H^1(\R)$ to $L^1(\R)$ with operator norm less than $C(1+\abs{t})$ in which $C$ is a constant independent of the coefficients of $P(x,y)$.
\end{theorem}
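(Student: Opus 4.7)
The plan is to prove the uniform bound $\|T^P a\|_{L^1(\R)}\le C(1+|t|)$ for every $H^1$-atom $a$, with $C$ independent of the coefficients $c_{\alpha,\beta}$ of $P$; the theorem then follows by atomic decomposition of $H^1(\R)$. Let $a$ be an atom supported in $B=B(y_0,r)$ with $\norm{a}_\infty\le|B|^{-1}$ and $\int a\diff y=0$. Since $\norm{S''_{xy}}_{HS}^{1/(d-2)}$ is a norm on $\R\times\R$, it is equivalent to the Euclidean norm on $\mathbb{R}^{2n}$, and hence the amplitude $K(x,y):=\norm{S''_{xy}}_{HS}^{\sigma_1+it}\psi(x,y)$ satisfies $|K(x,y)|\approx|(x,y)|^{-n}$ on $\supp\psi$; the unimodular factor $\norm{S''_{xy}}_{HS}^{it}$ contributes an extra factor of $(1+|t|)$ only when differentiated.

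I would split the analysis into two regimes. In the off-origin regime $r\le|y_0|/4$, the amplitude is smooth across $B$ with $|K|\lesssim|(x,y_0)|^{-n}$ and $|\partial_y K|\lesssim(1+|t|)|(x,y_0)|^{-n-1}$. I decompose $\|T^Pa\|_{L^1(\R)}$ into the local piece over $|x|\le 4|y_0|$ and the tail $|x|>4|y_0|$. The local piece is bounded by the pointwise estimate $|T^Pa(x)|\lesssim(1+|t|)|y_0|^{-n}$ integrated over a region of measure $\lesssim|y_0|^n$. For the tail I use $\int a\diff y=0$ to write
\[
T^Pa(x)=\int\bigl[K(x,y)e^{iP(x,y)}-K(x,y_0)e^{iP(x,y_0)}\bigr]a(y)\diff y,
\]
and decompose the bracket as $[K(x,y)-K(x,y_0)]e^{iP(x,y)}+K(x,y_0)[e^{iP(x,y)}-e^{iP(x,y_0)}]$. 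The first summand is controlled by the mean value theorem applied to $K$, together with the tail estimate $\int_{|x|>4|y_0|}|x|^{-n-1}\diff x\lesssim|y_0|^{-1}$, giving a contribution of size $O(1+|t|)$. In the near-origin regime $r>|y_0|/4$ the ball $B$ essentially contains the origin; there I would replace $y_0$ by the center of $B$ as the reference point and argue analogously, splitting $x$-space into $|x|\le Cr$ and $|x|>Cr$.

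The chief obstacle is the second summand $K(x,y_0)[e^{iP(x,y)}-e^{iP(x,y_0)}]$ in the tail, because $|\partial_yP|$ is not uniformly bounded in the coefficients of $P$. This is precisely where the Pan--Yang--Shi scheme is employed: one introduces a partition of unity in $(x,y)$-space which, on each piece, isolates the dominant monomial of $\partial_yP$, and then performs integration by parts in $y$ against $e^{iP(x,y)}$ to transfer $\partial_yP$ into a quantity of uniformly controllable size. The resulting estimates are coefficient-free on each piece. The hypothesis $c_{0,\beta}=0$ for all $\beta$ is essential because it forces every monomial of $P$ to carry a positive power of $x$; this underlies an induction on the number of nonvanishing monomials of $P$, where the dominant-monomial piece is factored as $x^{\alpha_0}\cdot(\text{lower-order polynomial})$ at each step, reducing to the treatment in \cite{shi2016sharp}. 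The factor $(1+|t|)$ in the final bound arises from differentiating $\norm{S''_{xy}}_{HS}^{it}$ in the mean value step.
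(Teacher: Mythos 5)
Your skeleton (atomic decomposition, splitting the atom location into near-origin and off-origin cases, splitting $x$-space into a local piece and a tail, using $\int a=0$ plus the estimate $|K(x,y)|\approx|(x,y)|^{-n}$ and the H\"ormander-type bound on $|K(x,y)-K(x,y_0)|$) matches the paper's structure for the ``easy'' parts and is fine as far as it goes. The factor $(1+|t|)$ does indeed come from differentiating $\norm{S''_{xy}}_{HS}^{it}$, as you say.

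The genuine gap is in your treatment of the oscillatory remainder $K(x,y_0)[e^{iP(x,y)}-e^{iP(x,y_0)}]$, which is the whole difficulty of the theorem. You propose a partition of unity isolating the dominant monomial of $\partial_y P$ followed by integration by parts in $y$. That is not the paper's method, and as stated it does not obviously close: integration by parts produces factors $1/\partial_y P$ and $\partial_y^2 P/(\partial_y P)^2$, and $\partial_y P$ vanishes on large sets, so one would need zero-set/Ricci--Stein control of these singular factors uniformly in the coefficients, plus a way to avoid an uncontrolled proliferation of terms under your ``induction on the number of monomials'' — none of which you establish. The paper instead runs an induction on the $y$-degree $l$ of $P$, writes $P=\sum_{|\alpha|\ge1,|\beta|=l}c_{\alpha,\beta}x^\alpha y^\beta+Q$, and — crucially — introduces a coefficient-dependent critical radius $r$ with $r^{-1}=|d_Q|^{l/|\alpha_0|}|c_{\alpha_0,\beta_0}|^{1/|\alpha_0|}$ (chosen by maximizing $|d_Q|^{l/|\alpha|}|c_{\alpha,\beta}|^{1/|\alpha|}$). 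On the intermediate annulus $Md_Q<|x-x_Q|<r$ it uses the elementary bound $|e^{iP}-e^{iQ}|\le|P-Q|$ together with the inductive hypothesis for $Q$; on the far region $|x-x_Q|\ge\max\{Md_Q,r\}$ it rescales to unit scale, forms $L_jL_j^*$ on dyadic annuli $R_j$, applies the van der Corput--type Lemma 3.3 to the kernel of $L_jL_j^*$, then the Ricci--Stein polynomial lemma and Schur's test to get $\norm{L_j}_2\lesssim 2^{-j\theta}$, and sums geometrically. No integration by parts appears anywhere. Your proposal omits the choice of $r$, the inductive hypothesis on $y$-degree, and the $TT^*$/van der Corput/Schur machinery, and replaces them with a sketch of a different method whose feasibility is not shown; as written, the hard half of the proof is missing.
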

The inductive argument in Pan \cite{pan1991hardy} starts with the the $L^p$ boundedness of the oscillatory singular integral operator obtained in \cite{ricci1987harmonic}. This method requires us to consider the following operator
\begin{equation*}
T_0(f)(x)=\int_{\R} \norm{S_{xy}^{''}}_{HS}^{\sigma_1+it}\psi(x,y)f(y)\diff{y}.
\end{equation*}
If we set $K(x,y)=\norm{S_{xy}^{''}}_{HS}^{\sigma_1+it}\psi(x,y)$, then the operator equals
\begin{equation}\label{oio-wn-o}
T_0(f)(x)=\int_{\R} K(x,y)f(y)\diff{y}
\end{equation}
\subsection{Mapping property of $T_0$.}
\begin{theorem}
Considering the operator $T_0$ defined in \eqref{oio-wn-o}, if $\norm{S_{xy}^{''}}_{HS}^{1/(d-2)}$ is a norm of $\R\times \R$, it follows
\begin{enumerate}
\item[{\rm (i)}] $T_0$ is of type $(p,p)$ whenever $1<p<+\infty$;
\item[{\rm (ii)}] $T_0$ is of weak type $(1,1)$;
\item[{\rm (iii)}] $T_0$ maps $H^1(\R)$ to $L^1(\R)$ with operator norm less than $C(1+|t|)$.
\end{enumerate}
\end{theorem}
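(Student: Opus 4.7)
My plan is to reduce each of (i)--(iii) to pointwise kernel estimates combined with standard real-variable machinery, tracking the $(1+\abs{t})$ factor throughout. Since each entry of $S_{xy}''$ is a homogeneous polynomial of degree $d-2$ in $(x,y)$, the function $\norm{S_{xy}''(x,y)}_{HS}$ is itself homogeneous of degree $d-2$, and its $(d-2)$-th root is $1$-homogeneous on $\R\times\R$. Under the hypothesis that this root is a norm, equivalence of norms in finite dimension gives $\norm{S_{xy}''(x,y)}_{HS}^{1/(d-2)}\asymp\abs{x}+\abs{y}$, and the identity $\sigma_1(d-2)=-n$ then produces the pointwise bounds
\[\abs{K(x,y)}\lesssim(\abs{x}+\abs{y})^{-n},\qquad \abs{\nabla_{x,y}K(x,y)}\lesssim(1+\abs{t})(\abs{x}+\abs{y})^{-n-1}\]
on the support of $\psi$, with the factor $1+\abs{t}$ arising from differentiating $\norm{S_{xy}''}_{HS}^{\sigma_1+it}$.

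For parts (i) and (ii) I would split $T_0=T_1+T_2+T_3$ along the regions $R_1=\{\abs{y}<\abs{x}/2\}$, $R_3=\{\abs{x}/2\leq\abs{y}\leq 2\abs{x}\}$ and $R_2=\{\abs{y}>2\abs{x}\}$. On $R_1\cup R_3$ the kernel is dominated by $\abs{x}^{-n}$ and the $y$-domain is contained in $B(x,3\abs{x})$, so $\abs{T_1 f(x)}+\abs{T_3 f(x)}\lesssim Mf(x)$ with $M$ the Hardy--Littlewood maximal operator; this immediately yields the $L^p$ bound for $1<p<\infty$ and weak $(1,1)$. On $R_2$ the kernel is dominated by $\abs{y}^{-n}$, so $\abs{T_2 f(x)}\leq\int_{\abs{y}>2\abs{x}}\abs{y}^{-n}\abs{f(y)}\diff{y}$, a nonincreasing function of $\abs{x}$; its $L^p$ boundedness follows from a Hardy-type inequality in polar coordinates (reducing to spherical averages of $f$), and the weak $(1,1)$ bound follows because the level set $\{T_2 f>\alpha\}$ is a ball $B(0,r_\alpha)$, and the trivial estimate $\alpha\leq(2r_\alpha)^{-n}\norm{f}_1$ forces $r_\alpha^n\lesssim\norm{f}_1/\alpha$.

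For part (iii) I would appeal to the atomic characterization of $H^1(\R)$ and prove $\norm{T_0 a}_1\lesssim 1+\abs{t}$ for every atom $a$ supported in $B=B(x_0,r)$ with mean zero and $\norm{a}_\infty\leq\abs{B}^{-1}$. Near the atom, Cauchy--Schwarz and the $L^2$ bound from (i) give $\norm{T_0 a}_{L^1(2B)}\lesssim\abs{B}^{1/2}\norm{a}_2\lesssim 1$. Away from the atom, the mean-zero condition permits the subtraction $T_0 a(x)=\int_B\l[K(x,y)-K(x,x_0)\r]a(y)\diff{y}$, which combined with the mean value theorem and the gradient estimate above gives $\abs{T_0 a(x)}\lesssim(1+\abs{t})\,r\int_B(\abs{x}+\abs{y})^{-n-1}\abs{a(y)}\diff{y}$. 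It then remains to verify the geometric inequality $\int_{(2B)^c}(\abs{x}+\abs{x_0})^{-n-1}\diff{x}\lesssim r^{-1}$ uniformly in $x_0$, which I would handle by separating the cases $\abs{x_0}\geq 2r$ and $\abs{x_0}<2r$ and subdividing each according to whether $\abs{x}$ is comparable to or much smaller than $\abs{x_0}$.

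The main obstacle is the uniform-in-$x_0$ estimate in part (iii). Because the kernel's singularity sits at the origin rather than along the diagonal $x=y$, atoms whose support lies close to $0$ must be handled differently from atoms far from the origin; this is precisely where the techniques of Pan \cite{pan1991hardy} and Yang--Shi \cite{yang2004sharp,shi2016sharp} enter. The linear-in-$\abs{t}$ dependence must be preserved throughout, so that on the line $\Re z=\sigma_1$ the resulting $H^1\to L^1$ bound is polynomial in $\abs{t}$ as needed for the complex interpolation yielding Theorem \ref{H1-L1} and ultimately Theorem A.
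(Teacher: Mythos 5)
Your approach to (i) and (ii) is a genuine alternative to the paper's, and it works. The paper proves (i) by the scaling argument $\int\frac{|f(y)|}{|x|^n+|y|^n}\diff y=\int\frac{|f(|x|y)|}{1+|y|^n}\diff y$ followed by polar coordinates and Minkowski; it proves (ii) by the full Calder\'on--Zygmund decomposition, which is more work but delivers precisely the H\"ormander-type kernel estimate \eqref{Hormander-cond} that it then reuses for (iii). Your pointwise domination $|T_1 f|+|T_3 f|\lesssim Mf$ on $\{|y|\leq 2|x|\}$ together with the radial-monotonicity/level-set argument for $T_2$ on $\{|y|>2|x|\}$ handles (i) and (ii) more quickly and without the C--Z machinery; the tradeoff is that it does not produce the kernel estimate \eqref{Hormander-cond}, so for (iii) you must establish that estimate separately, which in the end brings you close to the paper's line anyway.

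In (iii) there is a concrete gap. The geometric inequality you propose to verify,
\[
\int_{(2B)^c}(|x|+|x_0|)^{-n-1}\diff x\lesssim r^{-1}\qquad\text{uniformly in }x_0,
\]
is false as stated. For $|x_0|\geq 2r$ it does hold, since $\int_{\R}(|x|+|x_0|)^{-n-1}\diff x=c_n|x_0|^{-1}\lesssim r^{-1}$. But for $|x_0|<2r$ the nearest point of $(2B)^c$ to the origin is at distance $2r-|x_0|$, which tends to $0$ as $|x_0|\to 2r^{-}$; the integral is then comparable to $(2r-|x_0|)^{-1}$, which can be arbitrarily large compared with $r^{-1}$. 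The fix is exactly what the paper does: take the excised ball to be $MB$ for a fixed large absolute constant $M$ (the paper's $Q^*$ has side length $Md_{Q_j}$ with ``$M$ large enough''). Then for $|x_0|<2r$ one has $|x|\geq Mr-|x_0|>(M-2)r$ on $(MB)^c$, so the $|x|^{-n-1}$ tail integrates to $\lesssim((M-2)r)^{-1}\lesssim r^{-1}$, and the case $|x_0|\geq 2r$ is unaffected. Relatedly, the mean-value step producing the bound $(|x|+|y|)^{-n-1}$ is only justified when the segment $[y,x_0]$ stays away from the origin, which holds when $|x_0|\geq 2r$; when $|x_0|<2r$ the segment may pass near $0$, and one should instead use the cruder but still adequate bound $|x|^{-n-1}$, which is what the paper's Case I actually does. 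With those two adjustments your outline for (iii) goes through, with the $(1+|t|)$ factor coming only from differentiating $\norm{S_{xy}''}_{HS}^{\sigma_1+it}$, as you note.
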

\begin{proof}
(i) By the assumption that $\norm{S_{xy}^{''}}_{HS}^{1/(d-2)}$ is a norm of $\R\times \R$, and the fact that the norms in finite dimensional linear normed space are equivalent, we have $\norm{S_{xy}^{''}}_{HS}^{1/(d-2)}\approx (|x|^2+|y|^2)^{1/2}\approx |x|+|y|$. Since $K(x,y)=\norm{S_{xy}^{''}}_{HS}^{\sigma_1+it}\psi(x,y)$ and $\psi\in C_0^\infty(\R\times\R)$, then
\begin{equation*}
\abs{K(x,y)}\lesssim \frac{1}{(\abs{x}+\abs{y})^n}\approx \frac{1}{\abs{x}^n+\abs{y}^n}.
\end{equation*}
For any $f\in L^p(\R), ~1<p<+\infty$,
\begin{align*}
\norm{T_0f(x)}_p =\left(\int\abs{T_0f(x)}^p\diff{x}\right)^{1/p}
%&=\left(\int\abs{\int K(x,y)f(y)\diff{y}}^p\diff{x}\right)^{1/p}\\
&\leq\left( \int\abs{\int |K(x,y)||f(y)|\diff{y}}^p\diff{x}\right)^{1/p}\\
&\lesssim\left(\int\abs{\int \frac{|f(y)|}{\abs{x}^n+\abs{y}^n}\diff{y}}^p\diff{x}\right)^{1/p}\\
&=\left(\int \abs{\int \frac{|f(|x|y)|}{1+|y|^n}\diff{y}}^p\diff{x}\right)^{1/p}
\end{align*}
By using the polar coordinate $x=R\theta, ~y=r\omega$, the last term equals
\begin{align*}
&\omega_{n-1}^{1/p}\left(\int_0^{+\infty} \abs{\int_0^{+\infty} \int_{\mathbb{S}^{n-1}} \frac{|f(Rr\omega)|}{1+|r|^n}r^{n-1}\diff{\omega}\diff{r}}^pR^{n-1}\diff{R}\right)^{1/p}\\
&\leq \omega_{n-1}^{1/p} \int_0^{+\infty} \left(\int_0^{+\infty} \abs{\int_{\mathbb{S}^{n-1}} |f(Rr\omega)|\diff{\omega}}^p R^{n-1}\diff{R}\right)^{1/p}\frac{r^{n-1}}{1+|r|^n}\diff{r}\\
&\leq \omega_{n-1}^{1/p} \int_0^{+\infty} \left(\int_0^{+\infty} \int_{\mathbb{S}^{n-1}} |f(Rr\omega)|^p R^{n-1}\diff{\omega}\cdot \omega_{n-1}^{p/q} \diff{R}\right)^{1/p}\frac{r^{n-1}}{1+|r|^n}\diff{r}\\
&=\omega_{n-1}\int_0^{+\infty} \left(\int_0^{+\infty} \int_{\mathbb{S}^{n-1}} |f(Rr\omega)|^p R^{n-1}\diff{\omega}\diff{R}\right)^{1/p}\frac{r^{n-1}}{1+|r|^n}\diff{r}\\
&=\omega_{n-1}\int_0^{+\infty} \left(\int_0^{+\infty} \int_{\mathbb{S}^{n-1}} |f(R\omega)|^p R^{n-1}\diff{\omega}\diff{R}\right)^{1/p}\frac{r^{-n/p}\cdot r^{n-1}}{1+|r|^n}\diff{r}\\
&=\omega_{n-1}\norm{f}_p\int_0^{+\infty}\frac{r^{-n/p}\cdot r^{n-1}}{1+|r|^n}\diff{r}.
\end{align*}
Since the integral in the last term is finite, then $\norm{T_0f}_p\leq C\norm{f}_p$ obviously.\\
(ii) For any $f\in L^1(\R)$ and $\lambda>0$, we can decompose it into $f(x)=g(x)+b(x)$ by Calder\`{o}n-Zygmund decomposition. Here
\begin{align*}
&b=\sum b_j;\\
&b_j=(f(x)-\frac{1}{|Q_j|}\int_{Q_j}f(y)\diff{y})\chi_{Q_j}(x)
\end{align*}
where $Q_j$ is a cube centered at $x_{Q_j}$ with side length $d_{Q_j}$. Let $Q_j^*$ denote the cube centered at $x_{Q_j}$ with side length $Md_{Q_j}$ where $M$ is a constant large enough. Thus
\begin{align*}
|\{x: |T_0f(x)|>\lambda\}|&\leq |\{x: |T_0g(x)|>\lambda/2\}|+|\{x: |T_0b(x)|>\lambda/2\}|\\
&\leq 2/\lambda\norm{g}_1+\sum_j|Q_j^*|+|\{x\in (\cup_j Q_j^*)^c: |T_0b(x)|>\lambda/2\}|\\
&\lesssim\norm{g}_1/\lambda+\norm{f_1}_1/\lambda+|\{x\in (\cup_j Q_j^*)^c: |T_0b(x)|>\lambda/2\}|\\
&\lesssim\norm{f}_1/\lambda+|\{x\in (\cup_j Q_j^*)^c: |T_0b(x)|>\lambda/2\}|
\end{align*}
From \eqref{oio-wn-o}, it follows
\begin{align*}
|\{x\in (\cup_j Q_j^*)^c: |T_0b(x)|>\lambda/2\}|&\leq \frac{2}{\lambda}\int_{(\cup_j Q_j^*)^c} |T_0b(x)|\diff{x}\\
&=\frac{2}{\lambda}\int_{(\cup_j Q_j^*)^c} |\int_{\R} K(x,y)b(y)\diff{y}|\diff{x}\\
&=\frac{2}{\lambda}\int_{(\cup_j Q_j^*)^c} |\sum_j\int_{Q_j} K(x,y)b_j(y)\diff{y}|\diff{x}\\
&\leq \sum_j\frac{2}{\lambda}\int_{(Q_j^*)^c} |\int_{Q_j} K(x,y)b_j(y)\diff{y}|\diff{x}.
\end{align*}
By the vanishing property of $b_j$, we have
\begin{align*}
&\int_{(Q_j^*)^c} |\int_{Q_j} K(x,y)b_j(y)\diff{y}|\diff{x}\\
&=\int_{(Q_j^*)^c} |\int_{Q_j} (K(x,y)-K(x,x_Q))b_j(y)\diff{y}|\diff{x}.\\
\end{align*}
Obviously,
\begin{align*}
&\int_{(Q_j^*)^c} |\int_{Q_j} (K(x,y)-K(x,x_{Q_j}))b_j(y)\diff{y}|\diff{x}\\
&\leq \sup_{y\in Q_j}\int_{(Q_j^*)^c} |K(x,y)-K(x,x_{Q_j})|\diff{x}\cdot\int_{Q_j}|b_j(y)|\diff{y}.
\end{align*}
If we can prove that
\begin{equation}\label{Hormander-cond}
\sup_{y\in Q_j}\int_{(Q_j^*)^c} |K(x,y)-K(x,x_{Q_j})|\diff{x}\leq C
\end{equation}
 where $C$ is a constant independent of $Q_j$, on account of $\sum_j\norm{b_j}_1\leq C\norm{f}_1$, we will conclude (ii). However, analysis of this supremum should be split into two cases as follow.\\

Case I: $|x_{Q_j}|<2d_{Q_j}$.\\
In this case, $|y-x_{Q_j}|<d_{Q_j}$ yields $|y|<3d_{Q_j}$. Note that each entry in $S_{xy}^{''}$ is a homogeneous polynomial of degree $d-2$, and $|x|\approx|x-x_{Q_j}|>Md_{Q_j}\gg |y|$. Thus provided that $|\sigma_1|\leq 1/2$ we have
\begin{equation*}
|\nabla_y K(x,y)|\leq \frac{C(1+|t|)}{|x|^{n+1}}.
\end{equation*}
Therefore
\begin{align*}
\sup_{y\in Q_j}\int_{(Q_j^*)^c} |K(x,y)-K(x,x_Q)|\diff{x}&\lesssim\int_{|x-x_{Q_j}|>Md_{Q_j}} \frac{d_{Q_j}}{|x|^{n+1}}\diff{x}\\
&\leq\int_{|x|>(M-2)d_{Q_j}} \frac{d_{Q_j}}{|x|^{n+1}}\diff{x}\\
&\leq C.
\end{align*}

Case II: $|x_{Q_j}|\geq 2d_{Q_j}$.\\
In this case, since $y\in Q_j$ then $|y-x_{Q_j}|<d_{Q_j}$, i.e. $\frac{1}{2}|x_{Q_j}|<|y|<\frac{3}{2}|x_{Q_j}|$. Hence
\begin{align*}
&\sup_{y\in Q_j}\int_{(Q_j^*)^c} |K(x,y)-K(x,x_Q)|\diff{x}\\
&\lesssim\int_{|x-x_{Q_j}|>Md_{Q_j}} |K(x,y)-K(x,x_Q)|\diff{x}\\
&=\int_{|x-x_{Q_j}|>M|x_{Q_j}|}\cdots\diff{x}+\int_{Md_{Q_j}<|x-x_{Q_j}|\leq M|x_{Q_j}|}\cdots\diff{x} \\
:&=A+B.
\end{align*}
 Observe that $|y|\approx |x_{Q_j}|$, the estimate of $A$ is same with Case I and we omit here. For $B$, we have
\begin{align*}
B=&\int_{Md_{Q_j}<|x-x_{Q_j}|\leq M|x_{Q_j}|}|K(x,y)-K(x,x_Q)|\diff{x}\\
\leq &\int_{Md_{Q_j}<|x-x_{Q_j}|\leq M|x_{Q_j}|}|K(x,y)|+|K(x,x_Q)|\diff{x}\\
\leq &\int_{Md_{Q_j}<|x-x_{Q_j}|\leq M|x_{Q_j}|}\frac{1}{|x|^n+|y|^n}+\frac{1}{|x|^n+|x_{Q_j}|^n}\diff{x}\\
\leq &\int_{|x-x_{Q_j}|\leq M|x_{Q_j}|}\frac{1}{|y|^n}+\frac{1}{|x_{Q_j}|^n}\diff{x}\\
\leq &|x_{Q_j}|^n(\frac{1}{|y|^n}+\frac{1}{|x_{Q_j}|^n})\\
\leq &C.
\end{align*}
Thus, the proof of (ii) is complete.\\
(iii) Let $a$ denote a $H^1$ atom associated with a cube $Q$ centered at $x_Q$ with side length $d_Q$ and
\begin{align}
\supp& a\subset Q;\label{atom-p1}\\
\norm{a}_\infty&\leq\frac{1}{|Q|};\label{atom-p2}\\
\int_{Q}a&\diff{x}=0.\label{atom-p3}
\end{align}
Our goal is to prove
\begin{equation*}
\norm{T_0a}_1\leq C,
\end{equation*}
where $C$ is independent of $Q$. Analogous to the argument of (ii), the proof should be divided into two cases.

Case I: $|x_Q|<2d_Q$.
\begin{align*}
\norm{T_0a}_1=\int \abs{T_0a}\diff{x}
&=\int_{|x-x_Q|\leq M|d_Q|}\abs{T_0a}\diff{x}+\int_{|x-x_Q|> M|d_Q|}\abs{T_0a}\diff{x}\\
:&= I_1+I_2.
\end{align*}
From the $L^p(1<p<+\infty)$ boundedness of $T_0$ in (i), we have
\begin{equation*}
I_1=\int_{|x-x_Q|\leq M|d_Q|}\abs{T_0a}\diff{x}
\leq(M|d_Q|)^{n/2}\norm{T_0a}_2
\leq(M|d_Q|)^{n/2}\norm{a}_2
\leq C.
\end{equation*}
By employing the argument of Case I in (ii) to $I_2$, we obtain
\begin{align*}
I_2=\int_{(Q^*)^c}\abs{T_0a}\diff{x}=\int_{(Q^*)^c}\abs{\int_{Q} K(x,y)a(y)\diff{y}}\diff{x}
=&\int_{(Q^*)^c}\abs{\int_{Q} (K(x,y)-K(x,x_Q))a(y)\diff{y}}\diff{x}\\
\leq & \sup_{y\in Q}\int_{(Q^*)^c} |K(x,y)-K(x,x_Q)|\diff{x}\cdot \int_Q|a(y)|\diff{y}.
\end{align*}
Thus \eqref{Hormander-cond} together with \eqref{atom-p2} implies $I_2\leq C$.

Case II: $|x_Q|\geq 2d_Q$.
\begin{align*}
\norm{T_0a}_1=\int \abs{T_0a}\diff{x}
&=\int_{|x-x_Q|\leq M|x_Q|}\abs{T_0a}\diff{x}+\int_{|x-x_Q|> M|x_Q|}\abs{T_0a}\diff{x}\\
:&= I_3+I_4.
\end{align*}
Since $|x_Q|\geq 2d_Q$ and $y\in Q$, then $\frac{1}{2}|x_Q|\leq |y|\leq \frac{3}{2}|x_Q|$, it is easy to verify
\begin{align*}
I_3=\int_{|x-x_Q|\leq M|x_Q|}\abs{T_0a}\diff{x}
&=\int_{|x-x_Q|\leq M|x_Q|}\abs{\int_QK(x,y)a(y)dy}\diff{x}\\
%&\leq \int_{|x-x_Q|\leq M|x_Q|}\int_Q\frac{1}{|x|^n+|y|^n}|a(y)|\diff{y}\diff{x}\\
&\leq \int_{|x|\leq (M+1)|x_Q|}\int_Q\frac{1}{|x|^n+|y|^n}|a(y)|\diff{y}\diff{x}\\
%&= \int_Q\int_{|x|\leq (M+1)|x_Q|}\frac{1}{|x|^n+|y|^n}\diff{x} |a(y)|\diff{y}\\
&= \int_Q\int_{|x|\leq (M+1)|x_Q|/|y|}\frac{1}{|x|^n+1}\diff{x} |a(y)|\diff{y}\\
&= \int_Q\int_{|x|\leq 2(M+1)}\frac{1}{|x|^n+1}\diff{x} |a(y)|\diff{y}\\
&\leq C.
\end{align*}
Observe that
\begin{align*}
I_4\leq \int_{|x-x_Q|> M|d_Q|}\abs{T_0a}\diff{x}
&=\int_{(Q^*)^c}\abs{T_0a}\diff{x}\\
&=\int_{(Q^*)^c}\abs{\int_{Q} K(x,y)a(y)\diff{y}}\diff{x}\\
&=\int_{(Q^*)^c}\abs{\int_{Q} (K(x,y)-K(x,x_Q))a(y)\diff{y}}\diff{x}\\
&\leq \sup_{y\in Q}\int_{(Q^*)^c} |K(x,y)-K(x,x_Q)|\diff{x}\cdot \int_Q|a(y)|\diff{y}.
\end{align*}
On account of \eqref{Hormander-cond}, $I_4\leq C$ obviously.

\end{proof}
\subsection{Some useful lemmas}
Before we prove Theorem \ref{H1-L1}, some useful lemmas should be stated.
\begin{lemma}\label{van-der-corput}
Let $\phi(x)=\sum_{|\alpha|\leq d}a_{\alpha}x^\alpha$ be a real-valued polynomial in $\R$ of degree d, and $\varphi(x)\in C_{0}^\infty(\R)$. If $a_{\alpha_0}\neq 0$ for $\alpha_0=d$ we have
\begin{equation*}
\abs{\int_{\R}e^{i\phi(x)}\varphi(x)\diff{x}}\leq C\abs{a_{\alpha_0}}^{-1/d}(\norm{\varphi}_\infty+\norm{\nabla\varphi}_1).
\end{equation*}
\end{lemma}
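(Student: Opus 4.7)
The plan is to perform a dilation that normalizes the leading coefficient to unit modulus, and then to combine the classical van der Corput lemma (applied with $k=d$) with one integration by parts to absorb the amplitude $\varphi$.

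Specifically, I would change variables $x=|a_{\alpha_0}|^{-1/d}u$. This produces an overall factor of $|a_{\alpha_0}|^{-1/d}$ from the Jacobian, replaces $\phi$ by a polynomial $\widetilde{\phi}$ of the same degree $d$ whose leading coefficient is $\operatorname{sgn}(a_{\alpha_0})=\pm 1$, and replaces $\varphi$ by $\widetilde{\varphi}(u)=\varphi(|a_{\alpha_0}|^{-1/d}u)$. A direct computation gives $\|\widetilde{\varphi}\|_\infty=\|\varphi\|_\infty$ and, after a further substitution in the integral defining the $L^1$ norm of the derivative, $\|\nabla\widetilde{\varphi}\|_1=\|\nabla\varphi\|_1$. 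Thus matters reduce to proving the estimate under the normalization $|a_{\alpha_0}|=1$, with a constant $C$ depending only on $d$.

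Under this normalization $\widetilde{\phi}^{(d)}(u)=d!\,\operatorname{sgn}(a_{\alpha_0})$, so $|\widetilde{\phi}^{(d)}|\geq d!$ uniformly on $\mathbb{R}$. Pick any finite interval $[a,b]\supset\operatorname{supp}(\widetilde{\varphi})$ and set $F(u)=\int_a^u e^{i\widetilde{\phi}(s)}\,ds$. For $d\geq 2$ the classical van der Corput lemma yields $\|F\|_\infty\leq C_d$ with a constant depending \emph{only} on $d$; for $d=1$ the same bound is immediate because $\widetilde{\phi}'$ is a nonzero constant. Integration by parts in
\[
\int_{\mathbb{R}}e^{i\widetilde{\phi}(u)}\widetilde{\varphi}(u)\,du=-\int_a^b F(u)\,\widetilde{\varphi}'(u)\,du,
\]
whose boundary contributions vanish by compact support of $\widetilde{\varphi}$, then gives
\[
\Bigl|\int_{\mathbb{R}}e^{i\widetilde{\phi}(u)}\widetilde{\varphi}(u)\,du\Bigr|\leq C_d\,\|\widetilde{\varphi}\,'\|_1\leq C_d\bigl(\|\widetilde{\varphi}\|_\infty+\|\nabla\widetilde{\varphi}\|_1\bigr).
\]
Undoing the rescaling restores the prefactor $|a_{\alpha_0}|^{-1/d}$ and produces the claimed bound.

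The one delicate point is guaranteeing that the constant in van der Corput's lemma depends only on the degree $d$ and not on the subleading coefficients of $\widetilde{\phi}$. This is the well-known \emph{uniform} form of the lemma (see, e.g., Stein's \emph{Harmonic Analysis}, Chapter VIII), proved by induction on $d$: one partitions $\mathbb{R}$ according to whether $|\widetilde{\phi}'|$ is small or large, handling the small set with a polynomial sublevel estimate (whose measure bound depends only on the leading coefficient) and the complementary set by integrating by parts against $\widetilde{\phi}'$. Once this uniform bound is in hand, the rest of the argument above is routine.
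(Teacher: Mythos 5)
The paper does not actually prove this lemma: the text following the statement simply reads ``More details about this lemma can be found in \cite{stein1993harmonic},'' so there is no in-paper proof for your attempt to match. What matters, then, is whether your argument itself is correct for the statement as written, and here there is a genuine gap: the lemma lives on $\mathbb{R}^n$ (in this paper the macro $\R$ denotes $\mathbb{R}^n$, and the lemma is later invoked with the integration variable $v\in\mathbb{R}^n$), whereas your proof treats only the case $n=1$.

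Two steps break down for $n\geq 2$. First, the isotropic dilation $x=|a_{\alpha_0}|^{-1/d}u$ has Jacobian $|a_{\alpha_0}|^{-n/d}$, not $|a_{\alpha_0}|^{-1/d}$, and a change of variables gives $\norm{\nabla\widetilde{\varphi}}_1=|a_{\alpha_0}|^{(n-1)/d}\norm{\nabla\varphi}_1$ rather than equality. Tracking these factors, the scaling produces a bound of the form
\begin{equation*}
C\,|a_{\alpha_0}|^{-n/d}\norm{\varphi}_\infty + C\,|a_{\alpha_0}|^{-1/d}\norm{\nabla\varphi}_1,
\end{equation*}
which is strictly weaker than the claimed $C\,|a_{\alpha_0}|^{-1/d}(\norm{\varphi}_\infty+\norm{\nabla\varphi}_1)$ when $|a_{\alpha_0}|$ is small and $n\geq 2$. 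Second, the core one-dimensional device is no longer available: after normalization you invoke ``$\widetilde{\phi}^{(d)}(u)=d!\operatorname{sgn}(a_{\alpha_0})$'' and the primitive $F(u)=\int_a^u e^{i\widetilde{\phi}(s)}\diff{s}$, but for a multi-index $\alpha_0$ the quantity that is a nonzero constant is a mixed partial $\partial^{\alpha_0}\widetilde{\phi}$, not an ordinary $d$-th derivative, and van der Corput with a lower bound on $\phi^{(d)}$ has no direct analogue. The correct multi-dimensional argument (the one the paper delegates to Stein's book, and which goes back to Ricci--Stein) proceeds differently: one reduces to a one-dimensional oscillatory integral in a single chosen coordinate, treating the remaining variables as parameters, and controls the exceptional set where the relevant leading coefficient (now a polynomial in the other variables) is small via a uniform sublevel-set estimate of the type recorded in the paper's Lemma~\ref{poly-es}. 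Your outline is a clean and correct proof of the one-variable case, but as written it does not establish the lemma that the paper actually uses.
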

More details about this lemma can be found in \cite{stein1993harmonic}. The following lemma about polynomial was due to Ricci and Stein \cite{ricci1987harmonic}.
\begin{lemma}\label{poly-es}
Let $P(x)=\sum_{\abs{\alpha}\leq d}a_\alpha x^\alpha$ denote a polynomial in $\R$ of degree d. Suppose $\epsilon<1/d$, then
\begin{equation*}
\int_{\abs{x}\leq 1}\abs{P(x)}^{-\epsilon}\diff{x}\leq A_\epsilon \left(\sum_{\abs{\alpha}\leq d}\abs{a_\alpha}\right)^{-\epsilon}.
\end{equation*}
The bound $A_\epsilon$ depends on $\epsilon$ and dimension $n$, but not on the coefficients \{$a_\alpha$\}.
\end{lemma}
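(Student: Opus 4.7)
The plan is to use scale invariance to normalise $P$ and then derive a uniform sublevel-set estimate via a Remez--Brudnyi--Ganzburg type inequality. Setting $A:=\sum_{|\alpha|\le d}|a_\alpha|$ and $\tilde P:=P/A$, one has
\[
\int_{|x|\le 1}|P(x)|^{-\epsilon}\,\mathrm{d}x \;=\; A^{-\epsilon}\int_{|x|\le 1}|\tilde P(x)|^{-\epsilon}\,\mathrm{d}x,
\]
so it suffices to bound the right-hand integral by a constant depending only on $\epsilon,d,n$, uniformly over the compact slice $\Sigma_d:=\{Q:\deg Q\le d,\ \sum_\alpha|a_\alpha(Q)|=1\}$ of the finite-dimensional coefficient space.

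I would then pass to the distribution function via the layer-cake identity
\[
\int_{|x|\le 1}|Q(x)|^{-\epsilon}\,\mathrm{d}x \;=\; \epsilon\int_{0}^{\infty}t^{-\epsilon-1}\bigl|\{x\in B_1:|Q(x)|<t\}\bigr|\,\mathrm{d}t,
\]
reducing everything to the uniform sublevel-set estimate
\[
\bigl|\{x\in B_1:|Q(x)|<t\}\bigr|\;\le\;C_{d,n}\min(1,t^{1/d})\qquad\text{for all }Q\in\Sigma_d,\ t>0,
\]
where $B_1\subset\mathbb{R}^n$ is the closed unit ball. The hypothesis $\epsilon<1/d$ is exactly what makes $t^{1/d-\epsilon-1}$ integrable near the origin; combined with the trivial bound $|\{\cdot\}|\le|B_1|$ for the tail, splitting the $t$-integral at $t=1$ yields the required constant $A_\epsilon$.

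The main obstacle is the sublevel-set estimate, for which I would invoke the multi-variable Remez inequality: for every polynomial $Q$ of degree $\le d$ on $\mathbb{R}^n$ and every measurable $E\subset B_1$,
\[
\sup_{B_1}|Q|\;\le\;C_{d,n}\Bigl(\tfrac{|B_1|}{|E|}\Bigr)^{d}\sup_{E}|Q|.
\]
Applying this with $E=\{x\in B_1:|Q(x)|<t\}$ gives $\sup_{B_1}|Q|\cdot|E|^d\lesssim t$. Because $\Sigma_d$ is compact and the two norms $\sum_\alpha|a_\alpha|$ and $\sup_{B_1}|\cdot|$ on the finite-dimensional polynomial space are equivalent, $\sup_{B_1}|Q|\ge c_{d,n}>0$ uniformly on $\Sigma_d$, yielding $|E|\lesssim t^{1/d}$ as needed. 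The delicate point I expect to work hardest on is the Remez inequality itself; I would prove it by slicing $B_1$ with lines through the origin, applying the classical one-variable Remez theorem on each line (where the $d$-th power arises from the degree), and integrating in polar coordinates, using a pigeonhole to pick a direction on which a definite fraction of $\sup_{B_1}|Q|$ is attained.
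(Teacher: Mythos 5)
The paper itself does not prove this lemma: it is quoted from Ricci--Stein \cite{ricci1987harmonic} and used as a black box, so there is no internal proof to compare against. Judged on its own merits, your proposal is correct in all essential respects. The normalisation $\tilde P = P/A$ with $A=\sum_\alpha|a_\alpha|$, the layer-cake reduction to the sublevel-set bound $|\{x\in B_1:|Q(x)|<t\}|\lesssim\min(1,t^{1/d})$, and the derivation of that bound from the Brudnyi--Ganzburg form of the multivariate Remez inequality together with equivalence of norms on the finite-dimensional space $\Sigma_d$ all check out. The chain $\sup_{B_1}|Q|\le C(|B_1|/|E|)^d\sup_E|Q|\le C(|B_1|/|E|)^d t$, combined with the uniform lower bound $\sup_{B_1}|Q|\ge c_{d,n}>0$ on $\Sigma_d$, does give $|E|\lesssim t^{1/d}$, and $\epsilon<1/d$ is precisely the threshold making $t^{1/d-\epsilon-1}$ integrable at $t=0$. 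This is a clean, modern route to the estimate and perfectly adequate for the purpose the lemma serves in Section 3. The original Ricci--Stein argument is organised somewhat differently (it builds the sublevel-set bound more directly, passing through the one-dimensional case), but the content is the same and neither approach is more ``in the spirit of the paper'' since the paper only cites the result.

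The one part I would not sign off on as written is the closing sketch of a proof of the multivariate Remez inequality itself. Lines through the \emph{origin} play no distinguished role in Remez, and the pigeonhole you describe, namely selecting a single direction on which $|Q|$ is nearly maximal \emph{and} the one-dimensional trace of $E$ is small, does not obviously succeed: the two requirements pick out potentially disjoint sets of directions, and the polar Jacobian $r^{n-1}$ complicates the passage from the $n$-dimensional volume of $E$ to the lengths of its linear slices near the centre. The standard arguments slice through the extremal point of $|Q|$ on $\overline{B_1}$ (not the origin) and use a symmetrisation or averaging step; alternatively one can reduce to convex $E$ via a measure-theoretic lemma. Since Brudnyi--Ganzburg is a well-known citable theorem, this gap does not invalidate your overall plan, but the sketch should be replaced by a citation or substantially reworked if you intend it to be self-contained.
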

%By using the lemma above, next lemma follows
%\begin{lemma}
%\begin{equation}
%\sup_{y\in \R}\int_{\abs{x}\leq 1}\abs{P(x-y)}^{-\epsilon}\diff{x}\leq A_\epsilon \left(\sum_{\abs{\alpha}= d}\abs{a_\alpha}\right)^{-\epsilon}.
%\end{equation}
%\end{lemma}

\subsection{Proof of Theorem \ref{H1-L1}}
\begin{proof}
For the atoms in Hardy space, we use the same notations as the proof of (iii). To prove this theorem, we shall use induction on the degree $l$ of $y$ in $P(x,y)$.\\
If $l=0$, $P(x,y)$ contains only the pure $x$-term. Then from (iii) we know
\begin{align*}
\norm{T^Pa}_1=\norm{T_0a}_1\leq C.
\end{align*}
We suppose that $\norm{T^Pa}_1\leq C$ holds if the degree of $P$ in $y$ is less than $l$. As the proof of (iii), we consider two cases:

Case I: $|x_Q|\leq 2d_Q$.\\
\begin{align*}
\int \abs{T^Pa(x)}\diff{x}
&=\int_{\abs{x-x_Q}\leq Md_Q}\abs{T^Pa(x)}\diff{x}+\int_{\abs{x-x_Q}> Md_Q}\abs{T^Pa(x)}\diff{x}\\
:&=I_5+I_6.
\end{align*}
Taking absolute value in $I_5$ and recalling the argument of (i), $I_5\leq C$ obviously.\\
Write $P(x,y)=\sum_{|\alpha|\geq 1,|\beta|=l}c_{\alpha,\beta}x^{\alpha}y^{\beta}+Q(x,y)$, where $Q(x,y)$ is a polynomial with degree in $y$ less than or equal to $l-1$. We spilt $I_6$ into two parts,
\begin{align*}
I_6=&\int_{Md_Q<\abs{x-x_Q}<r}\abs{\int e^{iP(x,y)}K(x,y)a(y)\diff{y}}\diff{x}+\\
&\int_{\abs{x-x_Q}\geq\max\{Md_Q, r\}}\abs{\int e^{iP(x,y)}K(x,y)a(y)\diff{y}}\diff{x}\\
:&=I_7+I_8.
\end{align*}
Then for $I_7$, there is
\begin{align*}
I_7&=\int_{Md_Q<\abs{x-x_Q}<r}\abs{T^Pa}\diff{x}\\
&=\int_{Md_Q<\abs{x-x_Q}<r}\abs{\int e^{iP(x,y)}K(x,y)a(y)\diff{y}}\diff{x}\\
&=\int_{Md_Q<\abs{x-x_Q}<r}\abs{\int (e^{iP(x,y)}-e^{iQ(x,y)})K(x,y)a(y)\diff{y}}\diff{x}+\\
&\int_{Md_Q<\abs{x-x_Q}<r}\abs{\int e^{iQ(x,y)}K(x,y)a(y)\diff{y}}\diff{x}\\
:&=I_9+I_{10}.
\end{align*}
For $I_9$, we have
\begin{align*}
I_9=&\int_{Md_Q<\abs{x-x_Q}<r}\abs{\int (e^{iP(x,y)}-e^{iQ(x,y)})K(x,y)a(y)\diff{x}}\diff{x}\\
\lesssim& \int_{Md_Q<\abs{x-x_Q}<r}\int_{Q}|\sum_{|\alpha|\geq 1,|\beta|=l}c_{\alpha,\beta}x^{\alpha}y^{\beta}|\frac{1}{|x|^n+|y|^n}|a(y)|\diff{y}\diff{x}\\
\leq& \int_{Md_Q<\abs{x-x_Q}<r}\int_{Q}|\sum_{|\alpha|\geq 1,|\beta|=l}c_{\alpha,\beta}x^{\alpha}y^{\beta}|\frac{1}{|x|^n}|a(y)|\diff{y}\diff{x}\\
\leq& \int_{Md_Q<\abs{x-x_Q}<r}\int_{Q}\sum_{|\alpha|\geq 1,|\beta|=l}|c_{\alpha,\beta}||x|^{|\alpha|}|y|^l\frac{1}{|x|^n}|a(y)|\diff{y}\diff{x}\\
\leq&\int_{Md_Q<\abs{x-x_Q}<r}\int_{Q}\sum_{|\alpha|\geq 1,|\beta|=l}|c_{\alpha,\beta}||x|^{|\alpha|-n}|y|^l|a(y)|\diff{y}\diff{x}\\
%\lesssim &\int_{Md_Q<\abs{x-x_Q}<r}\int_{Q}\sum_{|\alpha|\geq 1,|\beta|=l}|c_{\alpha,\beta}||x|^{|\alpha|-n}|y|^l|a(y)|\diff{y}\diff{x}\\
\lesssim &\int_{Md_Q<\abs{x-x_Q}<r}\int_{Q}\sum_{|\alpha|\geq 1,|\beta|=l}|c_{\alpha,\beta}||x|^{|\alpha|-n}|d_Q|^l|a(y)|\diff{y}\diff{x}
\end{align*}
Since $|x_Q|<2d_Q$ and $\norm{a}_\infty\leq \frac{1}{|Q|}$, then
\begin{align*}
&\int_{Md_Q<\abs{x-x_Q}<r}\int_{Q}\sum_{|\alpha|\geq 1,|\beta|=l}|c_{\alpha,\beta}||x|^{|\alpha|-n}|d_Q|^l|a(y)|\diff{y}\diff{x}\\
&\leq \int_{|x|\leq 2r}\sum_{|\alpha|\geq 1,|\beta|=l}|c_{\alpha,\beta}||x|^{|\alpha|-n}|d_Q|^l\diff{x}\\
&\lesssim |d_Q|^l\sum_{|\alpha|\geq 1,|\beta|=l}|c_{\alpha,\beta}||r|^{|\alpha|}
\end{align*}
There must exist $(\alpha_0,\beta_0)$ such that $|\alpha_0|\geq 1,|\beta_0|=l$, and
\begin{equation*}
|d_Q|^{l/|\alpha_0|}|c_{\alpha_0,\beta_0}|^{1/|\alpha_0|}=\max_{|\alpha|\geq 1,|\beta|=l}|d_Q|^{l/|\alpha|}|c_{\alpha,\beta}|^{1/|\alpha|}.
\end{equation*}
Set $r^{-1}=|d_Q|^{l/|\alpha_0|}|c_{\alpha_0,\beta_0}|^{1/|\alpha_0|}$. Then $I_9\leq C$ obviously. On the other hand, by inductive hypothesis, $I_{10}\leq C$. Thus we complete the argument of $I_7$. For $I_8$ we have
\begin{align*}
I_8=&\int_{\abs{x-x_Q}\geq \max\{Md_Q, r\}}\abs{\int e^{iP(x,y)}K(x,y)a(y)\diff{y}}\diff{x}\\
\leq&\int_{\abs{x-x_Q}\geq \max\{Md_Q, r\}}\abs{\int e^{iP(x,y)}(K(x,y)-K(x,x_Q))a(y)\diff{y}}\diff{x}\\
&+\int_{\abs{x-x_Q}\geq \max\{Md_Q, r\}}\abs{K(x,x_Q)}\abs{\int e^{iP(x,y)}a(y)\diff{y}}\diff{x}\\
:&=I_{11}+I_{12}.
\end{align*}
From \eqref{Hormander-cond}, it is easy to verify $I_{11}\leq C$. Given $|x_Q|\leq 2d_Q$, we have $|K(x,x_Q)|\lesssim \frac{1}{|x|^n+|x_Q|^n}\approx \frac{1}{|x-x_Q|^n}$, therefore
\begin{align*}
I_{12}\leq&\int_{\abs{x-x_Q}\geq r}\abs{K(x,x_Q)}\abs{\int e^{iP(x,y)}a(y)\diff{y}}\diff{x}\\
\lesssim&\int_{\abs{x-x_Q}\geq r}\frac{1}{|x-x_Q|^n}\abs{\int e^{iP(x,y)}a(y)\diff{y}}\diff{x}\\
=&\sum_{j=0}^{+\infty}\int_{R_j}\frac{1}{|x-x_Q|^n}\abs{\int e^{iP(x,y)}a(y)\diff{y}}\diff{x}\\
=&\sum_{j=0}^{+\infty}\int_{R_j}\frac{1}{|x-x_Q|^n}\abs{\chi_{R_j}(x)\int e^{iP(x,y)}a(y)\diff{y}}\diff{x}\\
\lesssim&\sum_{j=0}^{+\infty}\int_{R_j}\frac{1}{2^{jn}r^n}\abs{\chi_{R_j}(x)\int e^{iP(x,y)}a(y)\diff{y}}\diff{x}
\end{align*}
where $R_j=\{x\in\R:2^jr\leq|x-x_Q|<2^{j+1}r\}$. Set $x=x_Q+2^jru, y=x_Q+d_Qv$ and $P_j(u,v)=P(x_Q+2^jru,x_Q+d_Qv)$, then
\begin{align*}
I_{12}\lesssim &\sum_{j=0}^{+\infty}\int_{2^jr\leq|x-x_Q|<2^{j+1}r}\frac{1}{2^{jn}r^n}\abs{\chi_{R_j}(x)\int e^{iP(x,y)}a(y)\diff{y}}\diff{x}\\
=&\sum_{j=0}^{+\infty}\int_{1\leq|u|<2}\abs{\hat{\chi}_{R_j}(u)\int e^{iP_j(u,v)}d_Q^na(x_Q+d_Qv)\diff{v}}\diff{u}.
\end{align*}
Suppose $\varphi\in C_0^\infty(\R)$ and
\[\varphi(v)\equiv 1 ~~~\text{for}~~~|v|\leq 1,~~~~~\varphi(v)\equiv 0 ~~~\text{for}~~~|v|\geq 2.\]
Define an operator $L_j$ by
\begin{equation*}
L_jf(u)=\hat{\chi}_{R_j}(u)\int e^{iP_j(u,v)}\varphi(v)f(v)\diff{v}.
\end{equation*}
Then
\begin{equation}\label{operator-sum}
I_{12}\lesssim\sum_{j=0}^{+\infty}\int_{1\leq|u|<2}|L_j(b)(u)|\diff{u}
\end{equation}
where $b(v)=d_Q^na(x_Q+d_Qv)$ is an atom associated with the unit cube centered at the origin. Set
\begin{align*}
L_j(u,w)={\rm{Ker}}(L_jL_j^*)=\hat{\chi}_{R_j}(u)\hat{\chi}_{R_j}(w)\int e^{iP_j(u,v)-iP_j(w,v)}|\varphi(v)|^2\diff{v}.
\end{align*}
Since
\begin{align*}
&P_j(u,v)-P_j(w,v)\\
&=\sum_{|\alpha|\geq 1,|\beta|=l}c_{\alpha,\beta}[(x_Q+2^jru)^{\alpha}-(x_Q+2^jrw)^{\alpha}](x_Q+d_Qv)^{\beta}+\tilde{Q}(u,w,v).
\end{align*}
Then from Lemma \ref{van-der-corput} we have
\begin{align*}
|L_j(u,w)|&\leq \hat{\chi}_{R_j}(u)\hat{\chi}_{R_j}(w)\abs{\sum_{|\alpha|\geq 1} c_{\alpha,\beta_0}[(x_Q+2^jru)^{\alpha}-(x_Q+2^jrw)^{\alpha}]d_Q^l}^
{-1/l}\\
&=\hat{\chi}_{R_j}(u)\hat{\chi}_{R_j}(w)\abs{\sum_{|\alpha|\geq 1}
c_{\alpha,\beta_0}\abs{2^jr}^{\abs{\alpha}}\left[\left(\frac{x_Q}{2^jr}+u\right)^{\alpha}-\left(\frac{x_Q}{2^jr}+w\right)^{\alpha}\right]d_Q^l}^{-1/l}\\
&=\hat{\chi}_{R_j}(u)\hat{\chi}_{R_j}(w)\abs{\sum_{|\alpha|\geq 1}\frac{c_{\alpha,\beta_0}d_Q^l}{|c_{\alpha_0,\beta_0}|^{\abs{\alpha}/|\alpha_0|}|d_Q|^{\abs{\alpha}l/|\alpha_0|}}
2^{j|\alpha|}\left[\left(\frac{x_Q}{2^jr}+u\right)^{\alpha}-\left(\frac{x_Q}{2^jr}+w\right)^{\alpha}\right]}^{-1/l}\\
:&=\hat{\chi}_{R_j}(u)\hat{\chi}_{R_j}(w)\abs{\sum_{|\alpha|\geq 1}b_{\alpha,\beta_0} 2^{j|\alpha|}\left[\left(\frac{x_Q}{2^jr}+u\right)^{\alpha}-\left(\frac{x_Q}{2^jr}+w\right)^{\alpha}\right]}^{-1/l}
\end{align*}
On the other hand, it is obvious that $|L_j(u,w)|\leq C$, for a large number $N$ we have
\begin{equation}\label{kernel}
|L_j(u,w)|\leq C\hat{\chi}_{R_j}(u)\hat{\chi}_{R_j}(w)\abs{\sum_{|\alpha|\geq 1}b_{\alpha,\beta_0} 2^{j|\alpha|}\left[\left(\frac{x_Q}{2^jr}+u\right)^{\alpha}-\left(\frac{x_Q}{2^jr}+w\right)^{\alpha}\right]}^{-1/Nl}.
\end{equation}
Now we figure out the coefficient of the term $u^{\alpha_0}$ in the right hand of \eqref{kernel} and denote it by $A_{\alpha_0}$. Thus
\begin{equation*}
A_{\alpha_0}=2^{j|\alpha_0|}b_{\alpha_0,\beta_0}+\sum_{|\alpha|\geq |\alpha_0|+1}2^{j|\alpha|}C_{d,\alpha} b_{\alpha,\beta_0}\left(\frac{x_Q}{2^jr}\right)^{\alpha-\alpha_0}
\end{equation*}
where $C_{d,\alpha}$ is a constant only depending on the degree of $P$ and $\alpha$. From Lemma \ref{poly-es} we have
\begin{align*}
\sup_w\int_{\R}|L_j(u,w)|\diff{u}&\leq C(A_{\alpha_0})^{-1/Nl}.
\end{align*}
 Because $|b_{\alpha_0,\beta_0}|=1$, $|b_{\alpha,\beta_0}|\leq 1$ and $r>Md_Q>\frac{M}{2}|x_Q|$, we can choose $M$ large enough such that
\begin{equation*}
2^{j|\alpha_0|-1}\leq |A_{\alpha_0}|\leq 3\cdot2^{j|\alpha_0|-1}.
\end{equation*}
Thus we can obtain
\begin{equation*}
\sup_w\int_{\R}|L_j(u,w)|\diff{u}\leq C2^{-j\theta},
\end{equation*}
where $\theta$ ia a positive constant independent of the coefficients of $P$. The same method can be applied to $\sup_u\int_{\R}|L_j(u,w)|$ and leads to the same estimate. By Schur lemma we have
\begin{equation*}
\norm{L_j}_2\leq C2^{-j\theta}.
\end{equation*}
Now we come back to \eqref{operator-sum}, by H\"{o}lder inequality
\begin{equation*}
I_{12}\leq \sum_{j=0}^{+\infty}C\norm{L_j(b)}\leq \sum_{j=0}^{+\infty}C\norm{L_j}_2\norm{b}_2 \leq C.
\end{equation*}

Case II: $|x_Q|> 2d_Q$.\\
In this case, we decompose the integral into two parts
\begin{align*}
\int \abs{T^Pa}\diff{x}
&=\int_{\abs{x-x_Q}\leq M|x_Q|}\abs{T^Pa}\diff{x}+\int_{\abs{x-x_Q}> M|x_Q|}\abs{T^Pa}\diff{x}\\
:&=I_{13}+I_{14}.
\end{align*}
  We shall get $I_{13}\leq C$ from the analogue of $I_3$. On the other hand, $I_{14}$ is similar to $I_6$, following the same pattern to deal with $I_6$ yields $I_{14}\leq C$. Thus we complete our proof.
\end{proof}

\section{Optimality of decay rates and examples}
 The optimality of decay rates can be derived from the proof of Theorem 4.1 in \cite{greenleaf2007oscillatory} and we omit here. Next we give an example to demonstrate our main result.\\

Let $n=2,d=6$ and $S(x,y)=\frac{1}{5}(x_1^5y_1+x_1y_1^5+x_1x_2^4y_2+x_1y_1^4y_2+x_1^4x_2y_1+x_2y_1y_2^4+x_2^5y_2+x_2y_2^5)$, then the Hessian matrix of $S(x,y)$ is
\begin{eqnarray*}
S_{xy}^{''}=
\l(\begin{array}{cc}
    x_1^4+y_1^4 &x_2^4+y_1^4\\

   x_1^4+y_2^4 &x_2^4+y_2^4\\
  \end{array}
  \r)
\end{eqnarray*}
Hence
\begin{equation*}
\norm{S_{xy}^{''}}_{HS}^{1/(d-2)}=\l[(x_1^4+y_1^4)^2+(x_1^4+y_2^4)^2+(x_2^4+y_1^4)^2+(x_2^4+y_2^4)^2\r]^{1/8}.
\end{equation*}
In fact the equation above can be regarded as composition of three different simple norms. Then $\norm{S_{xy}^{''}}_{HS}^{1/(d-2)}$ is a norm in $\mathbb{R}^2\times\mathbb{R}^2$ obviously. Thus this example satisfies the decay estimate in Theorem A.

If we let $n=2,d=6$ and $S(x,y)=\frac{1}{5}(x_1^5y_1+x_1y_1^5+x_2^5y_2+x_2y_2^5)$, then the Hessian matrix of $S(x,y)$ is
\begin{eqnarray*}
S_{xy}^{''}=
\l(\begin{array}{cc}
    x_1^4+y_1^4 &0\\

   0 &x_2^4+y_2^4\\
  \end{array}
  \r).
\end{eqnarray*}
This is the most simple case because the related oscillatory integral operator can be separated variables. By iterating the one-dimensional result of \cite{shi2016sharp} we can show that
\begin{equation*}
\norm{T_\lambda}_p\leq C\lambda^{-1/3} ~\text{for}~ 6/5\leq p\leq 6.
 \end{equation*}
Thus $d/(d-n)<p<d/n$ is not necessary to guarantee the sharp decay.

\section{Acknowledgement} This work was supported by the National Natural Science Foundation of China [grant numbers 11471309, 11271162 and 11561062].


\begin{thebibliography}{10}

\bibitem{arnold1986singularites}
Arnold V~I , Varchenko A~N, and
 Gusein-Zade S~M.
\newblock {\em Singularit{\'e}s des applications diff{\'e}rentiable}.
\newblock Mir, 1986.

\bibitem{greenblatt2005sharp}
Greenblatt M.
\newblock Sharp $L^2$ estimates for one-dimensional oscillatory integral
  operators with $C^\infty$ phase.
\newblock {\em Amer J Math}, 2005, 127: 659--695.

\bibitem{greenleaf2007oscillatory}
Greenleaf A, Pramanik M, and Tang W.
\newblock Oscillatory integral operators with homogeneous polynomial phases in
  several variables.
\newblock {\em J Funct Anal}, 2007, 244: 444--487.

\bibitem{greenleaf1999oscillatory}
Greenleaf A and Seeger A.
\newblock On oscillatory integral operators with folding canonical relations.
\newblock {\em Studia Math}, 1999, 132: 125--139.

\bibitem{hormander1973oscillatory}
H{\"o}rmander L.
\newblock Oscillatory integrals and multipliers on F$L^p$.
\newblock {\em Ark Mat}, 1973, 11: 1--11.

\bibitem{pan1991hardy}
Pan Y.
\newblock Hardy spaces and oscillatory singular integrals.
\newblock {\em Rev Mat Iberoam}, 1991, 7: 55--64.

\bibitem{phong1992oscillatory}
Phong D H~ and Stein E M~.
\newblock Oscillatory integrals with polynomial phases.
\newblock {\em Invent Math}, 1992, 110: 39--62.

\bibitem{phong1994models}
Phong D H~ and Stein E M~.
\newblock Models of degenerate fourier integral operators and radon transforms.
\newblock {\em Ann of Math (2)}, 1994, 140: 703--722.

\bibitem{phong1997newton}
Phong D H~ and Stein E M~.
\newblock The newton polyhedron and oscillatory integral operators.
\newblock {\em Acta Math}, 1997, 179: 105--152.

\bibitem{phong1998damped}
Phong D H~ and Stein E M~.
\newblock Damped oscillatory integral operators with analytic phases.
\newblock {\em Adv Math}, 1998, 134: 146--177.

\bibitem{ricci1987harmonic}
Ricci F and Stein E M~.
\newblock Harmonic analysis on nilpotent groups and singular integrals I.
  oscillatory integrals.
\newblock {\em J Funct Anal}, 1987, 73: 179--194.

\bibitem{rychkov2001sharp}
Rychkov V~S.
\newblock Sharp $L^2$ bounds for oscillatory integral operators with
  $C^\infty$ phases.
\newblock {\em Math Z}, 2001, 236: 461--489.

\bibitem{shi2016sharp}
Shi Z and Yan D.
\newblock Sharp $L^p$-boundedness of oscillatory integral operators with
  polynomial phases.
\newblock {\em Math Z}, 2017, 286: 1277--1302.

\bibitem{stein1993harmonic}
Stein  E~M and Murphy T~S.
\newblock {\em Harmonic analysis: real-variable methods, orthogonality, and
  oscillatory integrals}, volume~3.
\newblock Princeton University Press, 1993.

\bibitem{stein1959extension}
Stein E~M and Weiss G.
\newblock An extension of a theorem of marcinkiewicz and some of its
  applications.
\newblock {\em J Math Mech}, 1959, 8: 263--284.

\bibitem{tang2006decay}
Tang W.
\newblock Decay rates of oscillatory integral operators in ‘1+2’ dimensions.
\newblock {\em Forum Math}, 2006, 18: 427--444.

\bibitem{xiao2016endpoint}
Xiao L.
\newblock Endpoint estimates for one-dimensional oscillatory integral operator.
\newblock {Adv Math}, 2017, 316: 255--291.

\bibitem{yang2004sharp}
Yang C~W.
\newblock Sharp $L^p$ estimates for some oscillatory integral
  operators in $\mathbb{R}^1$.
\newblock {\em Illinois J. Math.}, 2004, 48: 1093--1103.

\end{thebibliography}
\end{document}